\crefname{hypothesis}{Hypothesis}{Hypotheses}
\title{Homogenization of high-contrast dielectric elastomer composites\thanks{Submitted to the editors DATE.
\funding{The work of the third author was supported  by NSF grant DMS-2110036.}}}
\author{
   Thuyen Dang\thanks{Department of Statistics/Committee on
    Computational and Applied Mathematics, University of Chicago, 5747
    S. Ellis Avenue, Chicago, Illinois 60637
    (\email{thuyend@uchicago.edu}).} 
  \and
  Yuliya Gorb\thanks{National Science Foundation, 2415 Eisenhower Avenue, Alexandria, Virginia 22314
    (\email{ygorb@nsf.gov}).}
    \and
  Silvia Jim\'{e}nez Bola\~{n}os\thanks{Department of
    Mathematics, Colgate University, 13 Oak Drive, Hamilton, New York 13346
    (\email{sjimenez@colgate.edu}).} 
}
\newcommand{\ZZ}{{\mathbb Z}}
\newcommand{\RR}{{\mathbb R}}
\newcommand{\DD}{{\mathbb{D}}}
\newcommand{\bfa}{\mathbf{a}}
\newcommand{\bfc}{\mathbf{c}}
\newcommand{\bfC}{\mathbf{E}}
\newcommand{\bfd}{\mathbf{d}}
\newcommand{\bfB}{\mathbf{B}}
\newcommand{\bfr}{\mathbf{r}}
\newcommand{\bfQ}{\mathbf{Q}}
\newcommand{\bfR}{\mathbf{R}}
\newcommand{\bfT}{\mathbf{T}}
\newcommand{\bfV}{\mathbf{V}}
\newcommand{\bfW}{\mathbf{W}}
\newcommand{\bfp}{\mathbf{p}}
\newcommand{\bg}{\mathbf{g}}
\newcommand{\nn}{\mathbf{n}}
\newcommand{\ee}{\mathrm{e}}
\newcommand{\kk}{\mathbf{k}}
\newcommand{\uu}{\mathbf{u}}
\newcommand{\vv}{\mathbf{v}}
\newcommand{\ww}{\mathbf{w}}
\renewcommand\vec[1]{\boldsymbol #1} 
\newcommand{\per}{\mathrm{per}}
\newcommand{\sym}{\mathrm{sym}}
\newcommand{\hmeas}{\mathcal{H}^{d-1}}
\newcommand{\abs}[1]{\left\lvert #1\right\rvert}
\newcommand{\norm}[1]{\left\lVert #1\right\rVert}
\newcommand{\tscale}{\xrightharpoonup[]{~2~}}
\newcommand{\cv}[1][]{%
\ifthenelse{\isempty{#1}}{\xrightarrow[\hphantom{~2~}]{}}{\xrightarrow[\hphantom{~2~}]{#1}}%
}
\newcommand{\wcv}[1][]{%
\ifthenelse{\isempty{#1}}{\xrightharpoonup[\hphantom{~2~}]{}}{\xrightharpoonup[\hphantom{~2~}]{#1}}%
}
\newcommand{\calB}{\mathcal{B}}
\newcommand{\calD}{\mathcal{D}}
\newcommand{\calL}{\mathcal{L}}
\newcommand{\fraM}{\mathfrak{M}}
\DeclareMathOperator{\di}{d\!}
\DeclareMathOperator{\e}{{e}}
\DeclareMathOperator{\Div}{{div}}
\def\XXint#1#2#3{{\setbox0=\hbox{$#1{#2#3}{\int}$ }
\vcenter{\hbox{$#2#3$ }}\kern-.6\wd0}}
\def\coloneqq{\doteq}
\DeclarePairedDelimiterX\Set[1]\{\}{%
  #1%
}
\begin{document}

\maketitle

\begin{abstract}
This paper focuses on the homogenization of high-contrast dielectric elastomer composites, which are materials that deform in response to electrical stimulation. The considered heterogeneous material, consisting of an ambient material with inserted particles, is described by a weakly coupled system of an electrostatic equation with an elastic equation enriched with electrostriction. It is assumed that particles gradually become rigid as the fine-scale parameter approaches zero. This study demonstrates that the effective response of this system entails a homogeneous dielectric elastomer, described by a system of PDEs whose equations are fully decoupled. The coefficients of the homogenized equations depend on various factors, including the geometry of the composite, the periodicity of the original microstructure, and the coefficients characterizing the initial heterogeneous material. In particular, these coefficients are significantly influenced by the high-contrast nature of the coefficients of the fine-scale problem.
\end{abstract}

\begin{keywords}
Homogenization, two-scale convergence,  high-contrast.
\end{keywords}

\begin{MSCcodes}
35B27, 35J47, 74A40, 74F15
\end{MSCcodes}


\section{Introduction}
\label{sec:introduction}

Recently, a large volume of research has focused on the development of {\bf electroactive polymers} exhibiting a unique property known as electrostriction, meaning that they can change shape in response to electrical stimulation. Electroactive polymers offer significant advantages over other active materials, such as electroactive ceramics and shape memory alloys, due to their large strain capabilities, quick response times, low density, and high resilience; see, e.g.,\cite{bauerDielectricElastomers2014}.

Depending on the activation mechanism, electroactive polymers can be categorized into several groups, and this study focuses mainly on {\bf dielectric elastomers} due to the wide range of their applications. More specifically, dielectric elastomers are being used in actuators, often referred to as artificial muscles; in power generation applications to generate energy by converting mechanical energy into electrical energy; in sensors for detecting force and pressure, leveraging their electromechanical properties; in soft robotics, where adaptability and compliance are crucial, in various
biomedical applications, including medical devices and implants; in micro-beam resonators and other small-scale devices that require precise control and actuation; in optical applications, such as deformable surfaces for optics and adaptive optics systems, due to their ability to change shape in response to electrical stimuli; in applications such as noise-canceling windows and active vibration control systems for structures, among many others \cite{suoTheoryDielectricElastomers2010}.

Despite these benefits, the performance of dielectric actuators at high voltages is often limited by dielectric breakdown or electromechanical instability, followed by dielectric breakdown \cite{hakimisiboniDielectricElastomerComposites2013}. An approach to overcoming these limitations is to incorporate fillers with varying elastic and electrical properties into a soft elastomeric host, creating {\bf dielectric elastomer composites}. In \cite{hakimisiboniDielectricElastomerComposites2013}, the authors explore the improved performance of these composites, in particular, considering the case of {\it rigid particles} and highlighting the need for further investigation in this area. This advantage of dielectric elastomer composites is also stressed in other studies of dielectric elastomer composites with rigid particles \cite{hakimisiboniDielectricElastomerComposites2014,luDielectricElastomerActuators2012,tianDielectricElastomerComposites2012}.

The above is the motivation for our current study, which is concerned with the rigorous periodic homogenization of dielectric elastomer composites. Periodic homogenization involves assumptions on scale separation and a periodic
microstructure of size $0< \varepsilon \ll 1$. The governing equations defined, on a heterogeneous bounded domain $\Omega$,
describing the dielectric elastomer under consideration, consist of a linear electrostatic (scalar) equation in the presence of a bounded free body charge, weakly coupled with an elastic (vectorial) equation with ``high-contrast'' elastic coefficients that, in addition, involves an {\it electrostriction} term. Here, {\it high-contrast}
means that the elastic properties of the material are vastly different (more specifically, the particles gradually become rigid as the fine-scale parameter $\varepsilon$ approaches zero), and {\it weak coupling} means that the elastic displacement does not enter the electrostatic equation.  The most comprehensive and well-detailed derivation of this model, which resulted in the form of \eqref{eq:p410} below, was presented in
\cite{francfortEnhancementElastodielectricsHomogenization2021}, where the free energy function is approximated in the limit of small deformations and moderate electric fields. In addition, a similar model was considered in \cite{tianDielectricElastomerComposites2012}.
However, to the best of our knowledge, our paper is the first devoted to the rigorous mathematical homogenization of a high-contrast dielectric elastomer composite with rigid particles. For modeling the rigid particulate phase, we use the simplest high-contrast relation, as given by \eqref{eq:5} below.

The main goal of this study is to determine the {\it macroscopic} or {\it effective} behavior of the considered periodic composite by developing the homogenization theory for this composite and deriving the approximation that corresponds to the case $\varepsilon = 0$. 

A brief overview of studies conducted on the mathematical theory of dielectric elastomers, both in general and focusing specifically on homogenization, is highlighted as follows. The equations characterizing the behavior of deformable dielectrics were pioneered by Toupin in \cite{toupinElasticDielectric1956}, see also  \cite{francfortEnhancementElastodielectricsHomogenization2021,tianDielectricElastomerComposites2012}.
The rigorous homogenization, which amounts to developing an asymptotic analysis of the limiting response of the given PDE system as $\varepsilon \to 0$, for the model with {\it bounded coefficients} was carried in
\cite{francfortEnhancementElastodielectricsHomogenization2021,tianDielectricElastomerComposites2012} and
references therein. Homogenization of a similar model for a magnetic suspension was conducted in 
\cite{dangHomogenizationNondiluteSuspension2021,dangGlobalGradientEstimate2022}. As for {\it high-contrast problems}, the rigorous theory of  homogenization for such problems was studied in e.g. 
\cite{amazianeHomogenizationQuasilinearElliptic2006,balHomogenizationHydrodynamicTransport2021,bellieudTorsionEffectsElastic2009,brianeHomogenizationNonuniformlyBounded2002,cherdantsevHighContrastHomogenisation2017,davoliHomogenizationHighcontrastMedia2023,gerard-varetHomogenizationStiffInclusions2021,gorbEffectiveConductivityDensely2004,khruslovHomogenizedModelsComposite1991,liptonBlochWavesHigh2022,marcelliniHomogenizationNonuniformlyElliptic1978,marchenkoHomogenizationPartialDifferential2006,moscoCompositeMediaAsymptotic1994,shenLargescaleLipschitzEstimates2021,peterDifferentChoicesScaling2008,kamotskiTwoscaleHomogenizationGeneral2019,cherednichenkoNonlocalHomogenizedLimits2006}.
This paper contributes to the above list by extending the homogenization result found in
\cite{francfortEnhancementElastodielectricsHomogenization2021,tianDielectricElastomerComposites2012} for a \textbf{dielectic elastomer composite} to the \textit{high-contrast case}. The homogenization of a system of PDEs describing an elastic material with high-contrast {\it soft} elastic particles was carried out in \cite{cherdantsevHighContrastHomogenisation2017}; in contrast, our results tackle an electro-elastic coupled system with {\it rigid} inhomogeneities.  Moreover, the investigation carried in
\cite{cherdantsevHighContrastHomogenisation2017} requires the density of body force to be small in magnitude, while the findings reported in this paper do not impose such a restrictive assumption.

As mentioned above, a rigorous periodic homogenization was previously performed for dielectric elastomer composites in
\cite{francfortEnhancementElastodielectricsHomogenization2021,tianDielectricElastomerComposites2012}. In
\cite{tianDielectricElastomerComposites2012}, a strong assumption was made on the integrability of the electric field, while in
\cite{francfortEnhancementElastodielectricsHomogenization2021} the authors also considered the enhancement effect, which appears when adequate active space charges are introduced. However, neither
\cite{francfortEnhancementElastodielectricsHomogenization2021,tianDielectricElastomerComposites2012}
considered the high-contrast case. 

The resulting homogenized response is represented by a fully decoupled system of electrostatics and two elasticity PDEs \eqref{eq:78}, one of which includes an electrostriction term. There are two elasticity equations in the homogenized response, which is due to the resulting homogenized elasticity displacement being decomposed into two components: one solving a problem with uniformly bounded coefficients and the other corresponding to the {\it high-contrast} coefficients of the original problem.

This paper is organized as follows. In Section~\ref{sec:formulation}, the main definitions and notation are introduced and the formulation of the fine-scale problem is discussed. Auxiliary facts are reviewed in Section~\ref{sec:some-results-two}. Our main result is stated in Section~\ref{sec:main-result}, and the conclusions are given in Section~\ref{sec:conclusions}.

\section{Formulation}
\label{sec:formulation}
\subsection{Notation}
Throughout this paper, scalar-valued functions, such as $h$ in \eqref{eq:p423}, are written in usual fonts, while vector-valued or tensor-valued functions, such as the displacement $\uu$ and the elastic tensor $\bfR$ in \eqref{eq:5}, are written in bold.  Sequences are indexed by superscripts ($\phi^i$), while elements of vectors or tensors are indexed by numeric subscripts ($x_i$).  We denote by $\mathds{1}_{A}$ the characteristic function of the set $A$. Finally, the Einstein
summation convention is used whenever applicable; $\delta_{ij}$ is the Kronecker delta, and $\epsilon_{ijk}$ is the permutation symbol.

\subsection{Set up of the problem}
\label{ss:setup}
Consider $\Omega \subset \RR^d$, for $d \ge 2$, a simply connected and bounded domain of class $C^{1,1}$, and let
$Y\coloneqq [0,1]^d$ be the unit cell in $\RR^d$. The unit cell $Y$ is decomposed into:
$$Y=Y_s\cup Y_f \cup \Gamma,$$
where $Y_s \subset\subset (0,1)^d$, representing the domain occupied by the inclusion, and $Y_f$,
representing the matrix material, are open sets in
$\mathbb{R}^d$, and $\Gamma$ is the closed $C^{1,1}$ interface that
separates them. Let $i = (i_1, \ldots, i_d) \in \ZZ^d$ be a vector of indices and $\{\ee^1, \ldots, \ee^d\}$ be the canonical basis of $\RR^d$. 
For a fixed small $\varepsilon > 0,$ we define the dilated sets: 
\begin{align*}
    Y^\varepsilon_i 
    \coloneqq \varepsilon (Y + i),~~
    Y^\varepsilon_{i,s}
    \coloneqq \varepsilon (Y_s + i),~~
    Y^\varepsilon_{i,f}
    \coloneqq \varepsilon (Y_f + i),~~
    \Gamma^\varepsilon_i 
    \coloneqq \partial Y^\varepsilon_{i,s}.
\end{align*}
Typically, in homogenization theory, the positive number $\varepsilon \ll 1$ is referred to as the {\it size of the microstructure}. The effective or homogenized response of the given suspension
corresponds to the case $\varepsilon=0$, whose derivation and justification is the main focus of this paper. 

We denote by $\nn_i,~\nn_{\Gamma}$ and $\nn_{\partial \Omega}$ the unit normal vectors to $\Gamma^{\varepsilon}_{i}$ pointing outward $Y^\varepsilon_{i,s}$, on $\Gamma$ pointing outward $Y_{s}$ and on $\partial \Omega$ pointing outward, respectively; and also, we denote by $\di \hmeas$ the $(d-1)$-dimensional Hausdorff measure.
In addition, we define the sets:
\begin{align}
  \label{eq:116}
    I^{\varepsilon} 
    \coloneqq \{ 
    i \in \ZZ^d \colon Y^\varepsilon_i \subset \Omega
    \},~~
    \Omega_s^{\varepsilon} 
    \coloneqq \bigcup_{i\in I^\varepsilon}
Y_{i,s}^{\varepsilon},~~
    \Omega_f^{\varepsilon} 
    \coloneqq \Omega \setminus \Omega_s^{\varepsilon},~~
    \Gamma^\varepsilon 
    \coloneqq \bigcup_{i \in I^\varepsilon} \Gamma^\varepsilon_i.
\end{align}
see \cref{fig:1}.

\begin{figure}[ht]
\centering
\includegraphics[width=0.5\textwidth]{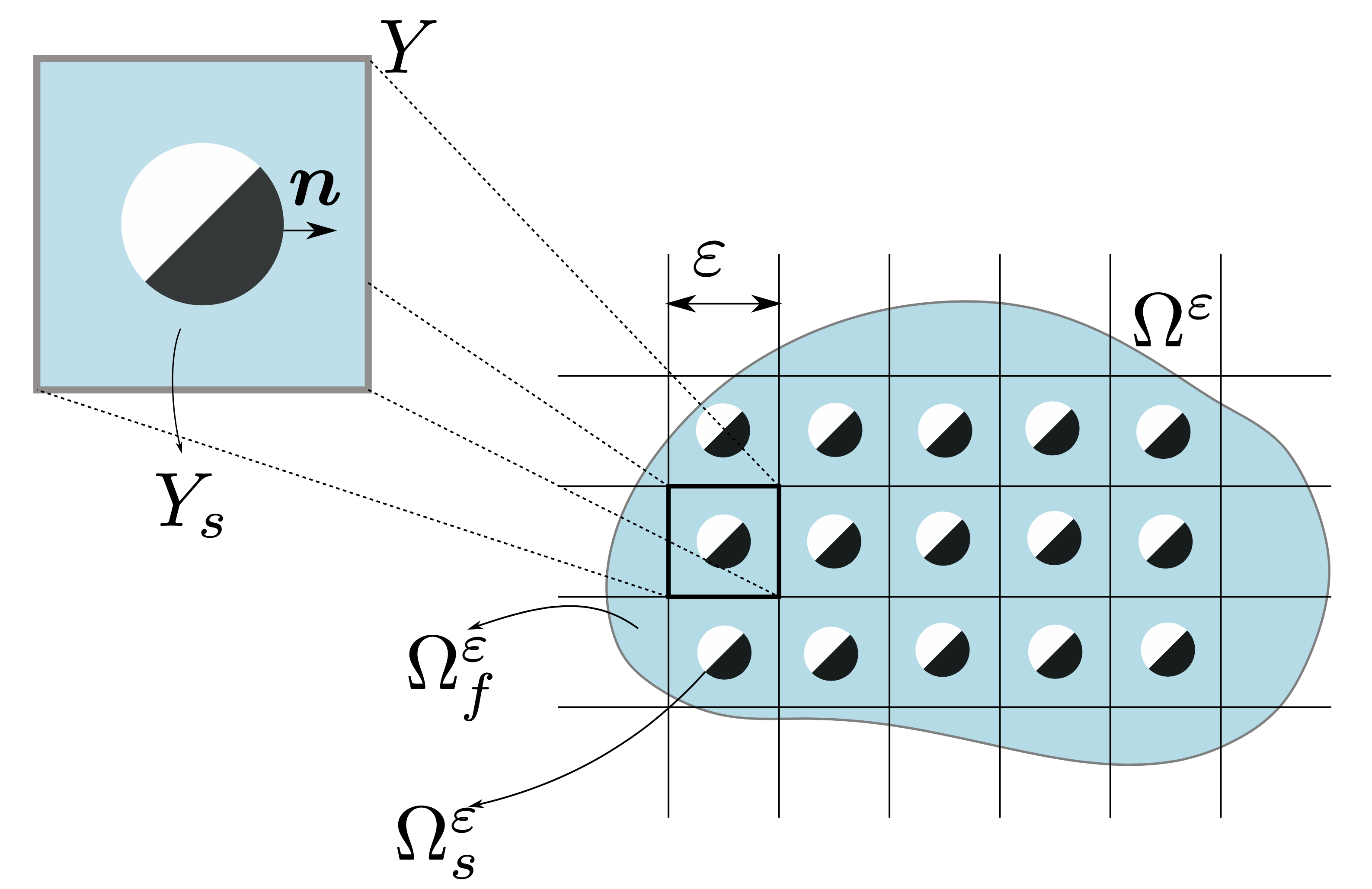}
\caption{Reference cell $Y$ and domain $\Omega$.}
\label{fig:1}
\end{figure}

Given a matrix $\bfr$ of size $d\times d$, we introduce the following definitions:
\begin{enumerate}[label=(A{\arabic*}),ref=\textnormal{(A{\arabic*})}]
\item \label{cond:a-periodic} $Y-$periodicity: for all $z
  \in \RR^d$, for all $m \in \ZZ$, and for all $k \in \{ 1,\ldots, d \}$ we have 
\begin{equation*}
\bfr (z + m \ee^k) = \bfr (z).
\end{equation*}
  \item\label{cond:a-bound} Boundedness: there exists $\Lambda > 0$ such that
\begin{equation*}
\norm{\bfr}_{L^{\infty}} \le \Lambda.
\end{equation*}
  \item\label{cond:a-elliptic} Ellipticity: there exists
    $\lambda > 0$ such that for all $\xi \in \RR^d$, for all $x \in
    \RR^d$, we have
\begin{equation*}
\bfr (x) \xi \cdot \xi \ge \lambda \abs{\xi}^2.
\end{equation*}
\item \label{cond:a-holder-cont} Piecewise H\"{o}lder continuity: there exists $\alpha \in (0,1)$ and a
  partition $\left\{ D_i \right\}_{i=1}^n$ of $Y$ such that $\bfr$ is
  $C^{\alpha}-$H\"{o}lder continuous on each $D_i$, $1 \le i \le n$.\\
\end{enumerate}

Denote by $\fraM (\lambda,\Lambda)$ the set of matrices that satisfy
\ref{cond:a-bound}-\ref{cond:a-elliptic}, and by
$\fraM_{\sym,\per}(\lambda,\Lambda)$ the subset of symmetric matrices in $\fraM(\lambda,\Lambda)$
that also satisfy \ref{cond:a-periodic}. 

We also introduce a few more definitions necessary for the elasticity equation. For $\uu \in H^1(\Omega,\RR^d)$, we define the symmetric gradient (also called the linearized strain tensor) by
\begin{align}
\label{eq:6}
\DD(\uu) \coloneqq \frac{\nabla \uu + \nabla \uu^{\top}}{2}.
\end{align}
Here, we recall that 
for a fourth-order tensor $\bfQ = \left( \bfQ_{ijkh} \right)_{1 \le
  i,j,k,h \le d}$ and two matrices $\bfc$ and $\bfd$, we define
$$\bfQ : \bfc \coloneqq \left( (\bfQ_{ijkh} \bfc_{kh})_{ij} \right)_{1
  \le i,j \le d}$$ and
$$\bfQ : \bfc : \bfd \coloneqq \bfQ_{ijkh} \bfc_{ij} \bfd_{kh},$$ for $1\le i,j,k,h\le d$. We say
that a fourth-order tensor $\bfQ$ is symmetric if it is both
\begin{itemize}
\item major symmetric, i.e., 
\begin{align*}
\bfQ_{ijkh} = \bfQ_{khij}, \text{ and }
\end{align*}
\item minor symmetric, i.e., 
  \begin{align*}
    \bfQ_{ijkh} = \bfQ_{ijhk} \text{ and } \bfQ_{ijkh} = \bfQ_{jikl},
\end{align*}
\end{itemize}
for any $1 \le i,j,k,h \le
d$.

Let $0 < \lambda_e < \Lambda_e < \infty$. We denote by
$\fraM^e(\lambda_e, \Lambda_e)$ the set of all fourth-order tensors
$\bfQ = \left( \bfQ_{ijkh} \right)_{1 \le i,j,k,h \le d}$ satisfying
$\norm{\bfQ}_{L^{\infty}} \le \Lambda_e$ and the following strong ellipticity condition (which is a generalization of
\ref{cond:a-elliptic} to fourth-order tensors)
\begin{align}
\label{eq:36}
\tag{A3'} \bfQ:\bfc:\bfc \ge \lambda_e \abs{\bfc}^2, \qquad \text{ for
  any matrix } \bfc \in \RR^{d \times d}.
\end{align}
We denote by $\fraM^e_{\per}(\lambda_e,\Lambda_e)$ (resp.
$ \fraM^e_{\sym}(\lambda_e,\Lambda_e)$) the subset of
$\fraM^e(\lambda_e,\Lambda_e)$ consisting of only $Y-$periodic tensors
(resp. symmetric tensors) and we define
$$\fraM^e_{\sym,\per} \coloneqq \fraM^e_{\sym} \cap \fraM^e_{\per}.$$

In the high-contrast elastic problem, we let
 $\gamma > 0,~\bfB, \bfR \in \fraM^e_{\sym,\per}(\lambda_e, \Lambda_e)$ and we define the elastic tensor by
\begin{align}
\label{eq:5}
\bfB^{\varepsilon} \left( y \right) \coloneqq \bfB(y)  + 
   \frac{1}{\varepsilon^{2\gamma}}  \bfR (y) \mathds{1}_{Y_s}(y), \quad\text{ for all } y \in Y.
\end{align}

\subsection{The fine-scale coupled system}
\label{sec:fine-scale-coupled}
Denote by $\bfC \in \fraM^e_{\sym,\per}(\lambda_e, \Lambda_e)$ a fourth-order tensor that describes the inherent electrostriction of the material, i.e., without taking into account the change in the
electrostatic field resulting from the deformation. Let $\bg \in L^2(\Omega,\RR^d)$, $f \in L^{\infty}(\Omega)$, $h \in C^{1,1}(\partial\Omega)$,  and
$\bfa \in \fraM_{\sym,\per}(\lambda,\Lambda)$ also satisfying \ref{cond:a-holder-cont}. The fine-scale displacement $\uu^{\varepsilon}$ and the electrostatic potential
$\varphi^{\varepsilon}$ satisfy the following coupled system \cite{tianDielectricElastomerComposites2012,francfortEnhancementElastodielectricsHomogenization2021}:
\begin{subequations}
  \label{eq:p410} 
\begin{align}
\label{eq:p411}
  -\Div \left[ \bfB^{\varepsilon} \left( \frac{x}{\varepsilon} \right): 
  \DD \left( 
  \uu^{\varepsilon}  \right) + 
  \bfC \left( \frac{x}{\varepsilon} \right): \left(\nabla \varphi^{\varepsilon} \otimes \nabla \varphi^{\varepsilon} \right) \right]
  &= \bg && \text{ in } \Omega,\\
  \label{eq:p421}
  -\Div \left[ \bfa \left( \frac{x}{\varepsilon} \right) \nabla
  \varphi^{\varepsilon} \right]
  &=  f && \text{ in }\Omega,
\end{align}
\end{subequations}
together with the boundary conditions:
\begin{subequations}
  \label{eq:p415} 
  \begin{align}
     \label{eq:p423}
  \varphi^{\varepsilon}
  &= h, \text{ on }\partial \Omega,\\
\label{eq:p416}
  \uu^{\varepsilon}
  &= 0, \text{ on }\partial \Omega.
\end{align}
\end{subequations}

The system \eqref{eq:p410}-\eqref{eq:p415} implies the following
balance equations:
\begin{subequations}
 \label{eq:p413}
\begin{align}
  \label{eq:p414}
  \int_{\Gamma_{i}^{\varepsilon}} \left\llbracket
  \bfB^{\varepsilon} \left( \frac{x}{\varepsilon} \right):  \DD (\uu^{\varepsilon}) + 
  \bfC \left( \frac{x}{\varepsilon} \right) : \left( \nabla \varphi^{\varepsilon} \otimes \nabla \varphi^{\varepsilon} \right)\right\rrbracket \nn_i\di \hmeas
  &=0,\\
  \label{eq:p422}
  \int_{\Gamma_{i}^{\varepsilon}} \left\{ \left\llbracket
  \bfB^{\varepsilon} \left( \frac{x}{\varepsilon} \right):  \DD
  (\uu^{\varepsilon}) + 
   \bfC \left( \frac{x}{\varepsilon} \right):  \left( \nabla \varphi^{\varepsilon} \otimes \nabla \varphi^{\varepsilon} \right)\right\rrbracket\nn_i \right\} \times \nn_i\di
  \hmeas
  &=0,
\end{align}
\end{subequations}
where $\left\llbracket \, \cdot \, \right\rrbracket$ denotes the jump
on the interfaces $\Gamma^{\varepsilon}_i$.

\section{Two-scale convergence}
\label{sec:some-results-two}
Two-scale convergence was conceived by G. Nguetseng \cite{nguetsengGeneralConvergenceResult1989} and
further developed by G. Allaire \cite{allaireHomogenizationTwoscaleConvergence1992}.  Here, we collect important notions
and results relevant to this paper, whose proofs can be found in \cite{allaireHomogenizationTwoscaleConvergence1992,nguetsengGeneralConvergenceResult1989}. The following spaces are used later in this paper.
\begin{itemize}[wide]
    \item $C_{\per}(Y)$ -- the subspace of $C(\RR^d)$ of $Y$-periodic functions;
    \item $C^{\infty}_{\per}(Y)$ -- the subspace of $C^{\infty}(\RR^d)$ of $Y$-periodic functions;
    \item $H^1_{\per}(Y)$ -- the closure of $C^{\infty}_{\per}(Y)$ in the $H^1$-norm;
    \item
    
    $\mathcal{D}(\Omega, X)$ -- where $X$ is a Banach space -- the space  infinitely differentiable functions from $\Omega$ to $X$, whose  support is a compact set of $\mathbb{R}^d$ contained in $\Omega$.

    \item $L^p(\Omega, X)$ -- where $X$ is a Banach space and $1 \le p \le \infty$ -- the space of measurable functions $w \colon x \in \Omega \mapsto w(x) \in X$ such that
    $
    \norm{w}_{L^p(\Omega, X)}
    \coloneqq \left(\int_{\Omega} \norm{w(x)}^p_{X} \di x\right)^\frac{1}{p} < \infty.
    $
     
    \item $L^p_{\per}\left(Y, C(\bar{\Omega})\right)$ -- the space of measurable functions $w \colon y \in Y \mapsto w(\cdot,y) \in C(\bar{\Omega})$, such that 
    $w$ is periodic with respect to $y$ and
    $
    \int_{Y} \left(\sup_{x \in \bar{\Omega}} \abs{w(x,y)}\right)^p \di y 
    < \infty.
    $
  \end{itemize}

\begin{definition}[$L^p-$admissible test function]
\label{sec:two-scale-conv-1}
Let $1 \le p < + \infty$. A function $\psi \in
L^p(\Omega \times Y)$,  $Y$-periodic in the second component, is called an $L^p-$admissible test function if for
all $\varepsilon > 0$,
$\psi \left( \cdot, \frac{\cdot}{\varepsilon} \right)$ is measurable and
\begin{align}
\label{eq:23}
\lim_{\varepsilon \to 0} \int_{\Omega} \abs{\psi \left( x,
  \frac{x}{\varepsilon} \right)}^p \di x = \frac{1}{\abs{Y}}
  \int_{\Omega} \int_Y \abs{\psi (x,y)}^p \di y \di x.
\end{align}
\end{definition}

It is known that functions belonging to the spaces $\mathcal{D} \left(\Omega,
  C_\per^\infty (Y)\right)$, $C \left( \bar{\Omega}, C_{\per}(Y)
\right)$, $L^p_{\per}\left( Y,  C(\bar{\Omega})\right)$ or $L^p\left(\Omega, C_{\per}(Y)\right)$ are admissible \cite{allaireHomogenizationTwoscaleConvergence1992}, but the precise
characterization of those admissible test functions is still an
open question.
\begin{definition}
  A sequence $\{ v^\varepsilon \}_{\varepsilon>0}$ in $L^2(\Omega)$ is
  said to \emph{two-scale converge} to $v = v(x,y)$, with
  $v \in L^2 (\Omega \times Y)$, and we write
  $v^\varepsilon \tscale v$, if and only if:
  $\{ v^\varepsilon \}_{\varepsilon>0}$ is bounded and
\begin{align}
\label{eq:2sc}
    \lim_{\varepsilon \to 0} \int_\Omega v^\varepsilon(x) \psi \left( x, \frac{x}{\varepsilon}\right) \di x 
    = \frac{1}{\abs{Y}} \int_\Omega \int_Y v(x,y) \psi(x,y) \di y \di x,
\end{align}
for any test function $\psi = \psi (x, y)$, with $\psi \in
\mathcal{D} \left(\Omega, C_\per^\infty (Y)\right)$.
\end{definition}
In \eqref{eq:2sc}, we can choose $\psi$ to be any ($L^2-$)admissible test
function. Any bounded sequence $v^{\varepsilon}\in L^2(\Omega)$ has
a subsequence that two-scale converges to a limit $v^0 \in L^2(\Omega
\times Y)$. 

To obtain the limit of the coupled equation, the following corrector result for the electrostatic potential $\varphi^{\varepsilon}$ is needed: 
\begin{theorem}[Corrector result]\cite[Theorem 1.8, Remark 1.10 and
Corollary 5.4]{allaireHomogenizationTwoscaleConvergence1992}
\label{sec:two-scale-corrector}
Let $u^{\varepsilon}$ be a sequence of functions in $L^2(\Omega)$ that
two-scale converges to a limit $u^0 (x,y) \in L^2(\Omega\times
Y)$. Assume that 
\begin{align}
\label{eq:24}
\lim_{\varepsilon \to 0} \norm{u^{\varepsilon}}_{L^2(\Omega)} =
  \norm{ 
  u^0
  }_{L^2(\Omega\times Y)}.
\end{align}
Then for any sequence $v^{\varepsilon}$ in $L^2(\Omega)$ that
two-scale converges to $v^0 \in L^2(\Omega \times Y),$ one has 
\begin{align}
\label{eq:25}
u^{\varepsilon} v^{\varepsilon} \wcv \frac{1}{\abs{Y}} \int_Y u^0(x,y)
  v^0(x,y) \di x \di y \text{ in } \calD '(\Omega).
\end{align}
Furthermore, if $u^0(x,y)$ 
belongs to $L^2 \left( \Omega, C_{\per}(Y) \right)$ or $L^2_{\per} \left( Y, C(\Omega) \right)$,
then 
\begin{align}
\label{eq:29}
\lim_{\varepsilon \to 0}  \norm{u^{\varepsilon}(x) - u^0 \left( x,
  \frac{x}{\varepsilon} \right)}_{L^2(\Omega)} = 0.
\end{align}
\end{theorem}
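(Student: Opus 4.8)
The plan is to first upgrade the scalar hypothesis \eqref{eq:24} into a genuine \emph{strong two-scale convergence} statement, and then read off both \eqref{eq:25} and \eqref{eq:29} from it (recall $\abs{Y}=1$ here). To this end, I would first establish that for \emph{every} $L^2$-admissible test function $\psi$ in the sense of \cref{sec:two-scale-conv-1},
\begin{align*}
\lim_{\varepsilon \to 0} \norm{u^{\varepsilon}(x) - \psi\left(x,\tfrac{x}{\varepsilon}\right)}_{L^2(\Omega)}^2 = \frac{1}{\abs{Y}}\norm{u^0 - \psi}_{L^2(\Omega\times Y)}^2 .
\end{align*}
This follows by expanding the square into three pieces: $\norm{u^{\varepsilon}}_{L^2(\Omega)}^2$, which tends to $\frac{1}{\abs{Y}}\norm{u^0}_{L^2(\Omega\times Y)}^2$ by \eqref{eq:24}; the cross term $\int_{\Omega} u^{\varepsilon}(x)\,\psi\left(x,\tfrac{x}{\varepsilon}\right)\di x$, which tends to $\frac{1}{\abs{Y}}\int_{\Omega}\int_{Y} u^0\psi\,\di y\,\di x$ because admissible functions are permitted in \eqref{eq:2sc}; and $\int_{\Omega}\abs{\psi\left(x,\tfrac{x}{\varepsilon}\right)}^2\di x$, which tends to $\frac{1}{\abs{Y}}\int_{\Omega}\int_{Y}\abs{\psi}^2\,\di y\,\di x$ exactly by the $L^2$-admissibility of $\psi$, i.e.\ by \eqref{eq:23} with $p=2$. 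If moreover $u^0 \in L^2(\Omega,C_{\per}(Y))$ or $u^0 \in L^2_{\per}(Y,C(\bar{\Omega}))$, then $u^0$ is itself $L^2$-admissible, so taking $\psi = u^0$ in the identity above makes its right-hand side vanish, which is precisely \eqref{eq:29}.

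It then remains to prove \eqref{eq:25}. Fix $\phi \in \calD(\Omega)$ and recall that $v^{\varepsilon}$, being two-scale convergent, is bounded in $L^2(\Omega)$, say $\norm{v^{\varepsilon}}_{L^2(\Omega)} \le M$. Given $\delta > 0$, I would pick $\psi \in \calD(\Omega,C_{\per}^{\infty}(Y))$ with $\norm{u^0-\psi}_{L^2(\Omega\times Y)} \le \delta$ --- such functions are admissible and dense in $L^2(\Omega\times Y)$ --- and split
\begin{align*}
\int_{\Omega} u^{\varepsilon}v^{\varepsilon}\phi\,\di x = \int_{\Omega}\Big(u^{\varepsilon} - \psi\big(\cdot,\tfrac{\cdot}{\varepsilon}\big)\Big)v^{\varepsilon}\phi\,\di x + \int_{\Omega}\psi\big(\cdot,\tfrac{\cdot}{\varepsilon}\big)v^{\varepsilon}\phi\,\di x .
\end{align*}
By Cauchy--Schwarz and the identity of the previous paragraph, the first integral is, for all sufficiently small $\varepsilon$, at most a fixed constant times $\delta\,M\,\norm{\phi}_{L^{\infty}(\Omega)}$; the second integral, since $\phi\psi$ is again admissible, converges as $\varepsilon\to 0$ to $\frac{1}{\abs{Y}}\int_{\Omega}\int_{Y}\psi\,v^0\phi\,\di y\,\di x$, which differs from $\frac{1}{\abs{Y}}\int_{\Omega}\int_{Y}u^0 v^0\phi\,\di y\,\di x$ by at most $\frac{1}{\abs{Y}}\delta\,\norm{v^0}_{L^2(\Omega\times Y)}\norm{\phi}_{L^{\infty}(\Omega)}$ (Cauchy--Schwarz once more). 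Letting $\delta\to 0$ then yields \eqref{eq:25}.

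The only genuinely substantive point is the first step: converting the scalar \emph{norm} hypothesis \eqref{eq:24} into an honest strong $L^2(\Omega)$-approximation of $u^{\varepsilon}$ by the oscillating profile $\psi(\cdot,\cdot/\varepsilon)$. Everything else is bookkeeping --- the iterated limit (first $\varepsilon\to 0$, then $\psi\to u^0$), the routine verification that products of admissible test functions with functions in $\calD(\Omega)$ remain admissible, the density and admissibility of $\calD(\Omega,C_{\per}^{\infty}(Y))$ in $L^2(\Omega\times Y)$, and the standard fact that a two-scale convergent sequence is bounded in $L^2(\Omega)$.
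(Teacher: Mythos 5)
The paper does not prove this theorem itself; it states it and cites Allaire's foundational paper on two-scale convergence (Theorem~1.8, Remark~1.10, and Corollary~5.4 there), and your argument reproduces precisely the standard proof found in that source. The squared-norm expansion correctly upgrades \eqref{eq:24} to strong two-scale convergence --- the first term by \eqref{eq:24}, the cross term because the paper explicitly allows $L^2$-admissible test functions in \eqref{eq:2sc}, the last term by \eqref{eq:23} with $p=2$ --- and taking $\psi = u^0$ (admissible under the extra regularity hypotheses, as the paper records) gives \eqref{eq:29}; the density-and-splitting step then delivers \eqref{eq:25}, with the iterated-limit bookkeeping sound. So the proposal is correct and takes essentially the same route as the reference the paper relies on.
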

In fact, the smoothness assumption on $u^0$ in \eqref{eq:29}  is needed
only for $u^0 \left( x, \frac{x}{\varepsilon}
\right)$ to be measurable and to belong to $L^2(\Omega)$. 

\section{Main results}
\label{sec:main-result}
\begin{lemma}[A priori estimates, existence and uniqueness]
  \label{sec:main-results-1}
  For each $\varepsilon > 0$, the system \eqref{eq:p410}-\eqref{eq:p415} has a unique weak
  solution $(\varphi^{\varepsilon}, \uu^{\varepsilon}) \in H^1(\Omega)
  \times H_0^1(\Omega,\RR^d)$ that satisfies
\begin{align}
\label{eq:27}
  \begin{split}
    &\norm{\uu^{\varepsilon}}_{H_0^1 (\Omega,\RR^d)} +  \norm{\varphi^{\varepsilon}}_{H^1(\Omega)}\\
  &\quad\le C \left(1+
    \norm{\bg}_{L^2(\Omega,\RR^d)} + \norm{h}_{C^{1,1}(\partial
    \Omega)}^2 + \norm{f}_{L^{\infty}(\Omega,\RR^d)}^2 \right),
  \end{split}
\end{align}
where $C$ is independent of $\varepsilon$.
\end{lemma}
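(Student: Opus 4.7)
The system \eqref{eq:p410} is weakly coupled, since the electrostatic equation \eqref{eq:p421} does not involve $\uu^{\varepsilon}$. My plan is therefore to solve \eqref{eq:p421} for $\varphi^{\varepsilon}$ first, to extract from $\varphi^{\varepsilon}$ enough integrability that the electrostriction term $\bfC(x/\varepsilon):(\nabla\varphi^{\varepsilon}\otimes\nabla\varphi^{\varepsilon})$ can be treated as a given forcing in $H^{-1}(\Omega,\RR^{d})$, and then to solve the linear elastic problem \eqref{eq:p411} for $\uu^{\varepsilon}$ with that datum. Uniqueness of the pair $(\varphi^{\varepsilon},\uu^{\varepsilon})$ follows from this decoupling combined with the linearity of each subproblem.

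For the electrostatic equation, I would lift the boundary value: since $\partial\Omega\in C^{1,1}$ and $h\in C^{1,1}(\partial\Omega)$, there is an extension $\widetilde h\in W^{2,\infty}(\Omega)$ with $\norm{\widetilde h}_{W^{2,\infty}(\Omega)}\le C\norm{h}_{C^{1,1}(\partial\Omega)}$. Writing $\varphi^{\varepsilon}=\widetilde h+\psi^{\varepsilon}$ with $\psi^{\varepsilon}\in H^{1}_{0}(\Omega)$ converts \eqref{eq:p421} into a homogeneous-Dirichlet problem whose right-hand side $f+\Div[\bfa(x/\varepsilon)\nabla\widetilde h]$ belongs to $L^{\infty}(\Omega)\hookrightarrow H^{-1}(\Omega)$ with norm controlled by $\norm{f}_{L^{\infty}(\Omega)}+\norm{h}_{C^{1,1}(\partial\Omega)}$. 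Since $\bfa\in\fraM_{\per}(\lambda,\Lambda)$, the form $(\psi,\phi)\mapsto\int_{\Omega}\bfa(x/\varepsilon)\nabla\psi\cdot\nabla\phi\,dx$ is bounded and coercive on $H^{1}_{0}(\Omega)$ uniformly in $\varepsilon$, so the Lax--Milgram lemma delivers a unique $\psi^{\varepsilon}$, and hence a unique $\varphi^{\varepsilon}\in H^{1}(\Omega)$ with $\norm{\varphi^{\varepsilon}}_{H^{1}(\Omega)}\le C(\norm{f}_{L^{2}(\Omega)}+\norm{h}_{C^{1,1}(\partial\Omega)})$.

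The main obstacle is that the elastic source $\bfC(x/\varepsilon):(\nabla\varphi^{\varepsilon}\otimes\nabla\varphi^{\varepsilon})$ must lie in $L^{2}(\Omega,\RR^{d\times d})$ uniformly in $\varepsilon$, for otherwise its pairing with $\DD(\vv)$ for $\vv\in H^{1}_{0}(\Omega,\RR^{d})$ is ill-defined; this forces a uniform $L^{4}$ bound on $\nabla\varphi^{\varepsilon}$, while Lax--Milgram alone yields only $L^{2}$. I would close this gap by invoking a Meyers--Avellaneda--Lin-type higher-integrability theorem for divergence-form operators with periodic bounded-measurable coefficients, supplemented by the piecewise H\"older assumption (A4) on $\bfa$, to obtain $\norm{\nabla\varphi^{\varepsilon}}_{L^{p}(\Omega)}\le C(1+\norm{f}_{L^{\infty}(\Omega)}+\norm{h}_{C^{1,1}(\partial\Omega)})$ for some $p\ge 4$ independent of $\varepsilon$.

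For the elastic equation, fix $\varepsilon>0$ and consider the bilinear form $a^{\varepsilon}(\uu,\vv)\coloneqq\int_{\Omega}\bfB^{\varepsilon}(x/\varepsilon):\DD(\uu):\DD(\vv)\,dx$ on $H^{1}_{0}(\Omega,\RR^{d})$. From \eqref{eq:5} and the ellipticity of $\bfB,\bfR\in\fraM^{e}_{\sym,\per}(\lambda_{e},\Lambda_{e})$ one has $\bfB^{\varepsilon}:\xi:\xi\ge\lambda_{e}\abs{\xi}^{2}$ for every symmetric $\xi$, so Korn's inequality gives uniform coercivity $a^{\varepsilon}(\uu,\uu)\ge c\lambda_{e}\norm{\uu}_{H^{1}_{0}}^{2}$. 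Although $\norm{\bfB^{\varepsilon}}_{L^{\infty}}\le C\varepsilon^{-2\gamma}$ blows up, the form remains bounded for each fixed $\varepsilon$, so Lax--Milgram produces a unique $\uu^{\varepsilon}\in H^{1}_{0}(\Omega,\RR^{d})$ solving the weak formulation, which encodes the transmission conditions \eqref{eq:p413} via integration by parts. Testing against $\uu^{\varepsilon}$ and using only the $\varepsilon$-independent lower bound on $\bfB^{\varepsilon}$ together with Cauchy--Schwarz yields
\[
\lambda_{e}\norm{\DD(\uu^{\varepsilon})}_{L^{2}}^{2}\le\norm{\bg}_{L^{2}}\norm{\uu^{\varepsilon}}_{L^{2}}+\Lambda_{e}\norm{\nabla\varphi^{\varepsilon}}_{L^{4}}^{2}\norm{\DD(\uu^{\varepsilon})}_{L^{2}},
\]
and absorbing $\norm{\DD(\uu^{\varepsilon})}_{L^{2}}$ together with Poincar\'e and the $L^{4}$ bound from the previous paragraph delivers the a priori estimate \eqref{eq:27}.
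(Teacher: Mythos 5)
Your strategy is the same as the paper's: decouple the system, lift the Dirichlet datum, solve the electrostatic equation by Lax--Milgram (uniformly in $\varepsilon$), upgrade the integrability of $\nabla\varphi^{\varepsilon}$ so that the electrostriction term is an admissible elastic source, establish $\varepsilon$-independent coercivity of the elastic form from \eqref{eq:5} (the $\varepsilon^{-2\gamma}$ term is nonnegative and can simply be dropped), apply Lax--Milgram again for each fixed $\varepsilon$, and test with $\uu^{\varepsilon}$ to close the estimate. The decoupling argument also gives uniqueness exactly as you say.

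The one step that needs sharpening is the regularity upgrade for $\nabla\varphi^{\varepsilon}$. A Meyers-type estimate by itself only produces $L^{2+\delta}$ with $\delta>0$ determined by the ellipticity ratio $\Lambda/\lambda$, and this $\delta$ can be arbitrarily small; it will not, in general, reach $p\ge 4$. Avellaneda--Lin, on the other hand, requires globally H\"older-continuous coefficients, which is not the case here. What actually does the job is a uniform-in-$\varepsilon$ Lipschitz (hence $W^{1,\infty}$) estimate for periodic divergence-form operators whose coefficients are merely \emph{piecewise} H\"older with $C^{1,\alpha}$ interfaces, i.e.\ a Li--Vogelius/Li--Nirenberg-type result carried over to the periodic homogenization setting. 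This is precisely what the paper invokes via \cite[Theorem~1]{dangGlobalGradientEstimate2022} (see also \cite{francfortEnhancementElastodielectricsHomogenization2021}); it gives $\nabla\varphi^{\varepsilon}\in L^{\infty}(\Omega)$ uniformly in $\varepsilon$, which is strictly stronger than the $L^{4}$ bound you need and makes the electrostriction term an $L^{\infty}$ (in particular $L^{2}$) source. So you have identified the right mechanism and the right hypothesis (A4), but the named tool should be the piecewise-H\"older gradient estimate rather than a Meyers bound, since the latter cannot be pushed to $p\ge 4$ without additional structure.
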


\begin{proof}[Proof of \cref{sec:main-results-1}]
  \begin{enumerate}[wide]
  \item Let $\Phi \in H^1(\Omega)$ such that $\mathrm{trace}\, \Phi = h$ on $\partial \Omega$. We look
for $\varphi^{\varepsilon} \in H^1(\Omega)$ and
$\uu^{\varepsilon} \in H_0^1(\Omega,\RR^d)$  such that for all
$\eta \in H_0^1(\Omega)$ and $\bm{\xi} \in C_c^{\infty}(\Omega, \RR^d)$,
the following holds
\begin{align}
  \label{eq:r5}
  \begin{split}
    &\int_{\Omega} \bfa \left( \frac{x}{\varepsilon} \right) \nabla
    \left( \varphi^{\varepsilon} - \Phi \right) \cdot \nabla \eta \di x 
    + \int_{\Omega} \bfB^{\varepsilon} \left( \frac{x}{\varepsilon} \right): \DD(\uu^{\varepsilon}) : \DD(\bm{\xi})
    \di x 
   \\ &\qquad+
    \int_{\Omega} \bfC \left( \frac{x}{\varepsilon} \right) : \left(  \nabla \varphi^{\varepsilon} \otimes \nabla \varphi^{\varepsilon} \right) : \DD(\bm{\xi})\di x
   \\ &\qquad=  \int_{\Omega} \bg \cdot \bm{\xi} \di x 
    + \int_{\Omega} f \eta \di x
    -\int_{\Omega}
    \bfa \left( \frac{x}{\varepsilon} \right)
    \nabla \Phi \cdot \nabla \eta \di x.
\end{split}
\end{align}

\item By letting $\bm{\xi} = 0$, we look for a unique solution
$\varphi^{\varepsilon} \in H^1(\Omega)$ of the electrostatic problem

\begin{align}
  \label{eq:r6}
  \int_{\Omega} \bfa \left( \frac{x}{\varepsilon} \right)
  \nabla \left( \varphi^{\varepsilon} - \Phi \right)\cdot \nabla \eta \di x
  =\int_{\Omega} f \eta \di x
  -\int_{\Omega}
   \bfa \left( \frac{x}{\varepsilon} \right)
    \nabla \Phi  \cdot\nabla \eta \di x, \text{ for all }
  \eta \in H_0^1(\Omega).
\end{align}

From the Lax-Milgram theorem and the trace theorem, \eqref{eq:r6} has a
unique solution $\varphi^{\varepsilon} \in H^1(\Omega)$ such that 
\begin{align*}
  \norm{\varphi^{\varepsilon}}_{H^1(\Omega)}
  \le C \left( \norm{h}_{C^{1,1}(\partial \Omega)} + \norm{f}_{L^{\infty}(\Omega)} \right)
\end{align*}
for some $C > 0$, independent of $\varepsilon$. Moreover, by \cite[Theorem
1]{dangGlobalGradientEstimate2022} (see also \cite{francfortEnhancementElastodielectricsHomogenization2021}), $\varphi^{\varepsilon}$ 
actually belongs to $W^{1,\infty}(\Omega)$ and
\begin{align}
\label{eq:26}
\norm{\nabla \varphi^{\varepsilon}}_{L^{\infty}(\Omega)}
  \le C \left( \norm{h}_{C^{1,1}(\partial \Omega)} + \norm{f}_{L^{\infty}(\Omega)} \right).
\end{align}
Therefore, we have $ \nabla
\varphi^{\varepsilon} \otimes \nabla \varphi^{\varepsilon} \in
L^{\infty}(\Omega,\RR^{d \times d})$.

\item By letting $\eta = 0$, we look for $\uu^{\varepsilon} \in H_0^1(\Omega,\RR^d)$ such
that for any $\bm{\xi} \in C_c^{\infty}(\Omega,\RR^d)$, we have
\begin{align}
\label{eq:r7}
\int_{\Omega} \bfB^{\varepsilon} \left( \frac{x}{\varepsilon} \right): \DD(\uu^{\varepsilon}):\DD(\bm{\xi})
  \di x 
  =\int_{\Omega} \bg \cdot \bm{\xi} \di x
  - \int_{\Omega} \bfC \left( \frac{x}{\varepsilon} \right): \left(  \nabla \varphi^{\varepsilon} \otimes \nabla \varphi^{\varepsilon} \right) : \DD(\bm{\xi})\di x.
\end{align}
For $\uu,\bm{\xi}$ in $H^1_0(\Omega,\RR^d)$, we define 
\begin{align*}
  \calB^{\varepsilon} (\uu, \bm{\xi})
  &\coloneqq \int_{\Omega} \bfB^{\varepsilon} \left( \frac{x}{\varepsilon} \right): \DD(\uu^{\varepsilon}) : \DD(\bm{\xi})
    \di x,\\
  \calL^{\varepsilon}(\bm{\xi})
  &\coloneqq
    \int_{\Omega} \bg \cdot \bm{\xi} \di x
  - \int_{\Omega} \bfC \left( \frac{x}{\varepsilon} \right): \left(  \nabla \varphi^{\varepsilon} \otimes \nabla \varphi^{\varepsilon} \right): \DD(\bm{\xi})\di x.
\end{align*}
Then, for each $\varepsilon > 0$, $\calB^{\varepsilon}$ is a continuous
bilinear form on $H^1_0(\Omega,
\RR^d) \times H_0^1(\Omega,\RR^d)$ and $\calL^{\varepsilon}$ is a
linear form on $H_0^1(\Omega,\RR^d)$. Indeed, by H\"{o}lder's
inequality and \eqref{eq:26}, there exists a
constant $C > 0$ independent of $\varepsilon$ such that
\begin{align}
  \label{eq:17}
  \begin{split}
    \abs{\calB^{\varepsilon}(\uu,\bm{\xi})}
  &\le \norm{\bfB^{\varepsilon}}_{L^{\infty}(\Omega)} \norm{\DD(\uu)}_{L^2(\Omega)}
    \norm{\DD(\bm{\xi})}_{L^2(\Omega)}\\
    &\quad\le C \left( 1 + \frac{1}{\varepsilon^{2\gamma}} \right) \Lambda_e
    \norm{\uu}_{H_0^1(\Omega)} \norm{\bm{\xi}}_{H_0^1(\Omega)},\\
  \abs{\calL^{\varepsilon}(\bm{\xi})}
  &\le \norm{\bg}_{L^2(\Omega)} \norm{\bm{\xi}}_{L^2(\Omega)} + \Lambda_e\norm{\nabla
    \varphi^{\varepsilon}}_{L^{\infty}(\Omega)}^2 \norm{\DD(\bm{\xi})}_{L^{2}(\Omega)}\\
  &\quad\le C \norm{\bm{\xi}}_{H_0^1(\Omega)} \left( \norm{\bg}_{L^2(\Omega,\RR^d)} +
    \norm{h}^2_{C^{1,1}(\partial \Omega)} + \norm{f}_{L^{\infty}(\Omega)}^2 \right).
  \end{split}
\end{align}

\item We claim that $\calB^{\varepsilon}$ is coercive. Indeed, by
  \eqref{eq:5} and  Korn's inequality, we have
\begin{align}
  \label{eq:16}
  \begin{split}
    \calB^{\varepsilon} (\uu, \uu)
  &= \int_{\Omega} \bfB^{\varepsilon} \left( \frac{x}{\varepsilon}
    \right) : \DD(\uu):\DD(\uu) \di x\\
  &= \int_{\Omega} \left( \bfB\left( \frac{x}{\varepsilon}
    \right) + 
 \frac{1}{\varepsilon^{2\gamma}} \bfR \left( \frac{x}{\varepsilon}
    \right)
    \mathds{1}_{Y_s}\left( \frac{x}{\varepsilon}
    \right) \right):  \DD (\uu) :\DD(\uu)\di x\\
  &= \int_{\Omega}  \bfB\left( \frac{x}{\varepsilon}
    \right):  \DD (\uu) :\DD(\uu)\di x +
    \frac{1}{\varepsilon^{2\gamma}} \int_{\Omega_s^{\varepsilon}} \bfR
    \left( \frac{x}{\varepsilon}
    \right) : \DD(\uu):\DD(\uu)\di x\\
  &\ge \lambda_e \norm{\DD(\uu)}_{L^2(\Omega,\RR^{d\times d})}^2 + \lambda_e
    \norm{\frac{1}{\varepsilon^{\gamma}}
    \mathds{1}_{\Omega_s^{\varepsilon}}
    \DD(\uu)}^2_{L^2(\Omega,\RR^{d\times d})}\\
    &\ge C \norm{\uu}_{H_0^1 (\Omega,\RR^d)}^2,
  \end{split}
\end{align}
where $C$ is independent of $\varepsilon$. We conclude from the
Lax-Milgram theorem that, for each $\varepsilon > 0$, \eqref{eq:r7}
has a unique solution $\uu^{\varepsilon} \in H_0^1(\Omega,\RR^d).$ 
\item We now prove a uniform estimate for $\uu^{\varepsilon}$ with
  respect to $\varepsilon$. By density, we can choose $\bm{\xi} =
  \uu^{\varepsilon}$ in \eqref{eq:r7} to obtain 
\begin{align*}
\calB^{\varepsilon} (\uu^{\varepsilon}, \uu^{\varepsilon}) = \calL^{\varepsilon}(\uu^{\varepsilon}).
\end{align*}
Now by \eqref{eq:17} and \eqref{eq:16}, we have
\begin{align*}
  \norm{\uu^{\varepsilon}}^2_{H_0^1 (\Omega,\RR^d)}
  &\le C \norm{\uu^{\varepsilon}}_{H_0^1 (\Omega,\RR^d)} \left(
    \norm{\bg}_{L^2(\Omega,\RR^d)} + \norm{h}_{C^{1,1}(\partial
    \Omega)}^2 + \norm{f}_{L^{\infty}(\Omega,\RR^d)}^2 \right),
\end{align*}
from where we obtain
\begin{align}
\label{eq:18}
   \norm{\uu^{\varepsilon}}_{H_0^1 (\Omega,\RR^d)}
  &\le C  \left(
    \norm{\bg}_{L^2(\Omega,\RR^d)} + \norm{h}_{C^{1,1}(\partial
    \Omega)}^2 + \norm{f}_{L^{\infty}(\Omega,\RR^d)}^2 \right).
\end{align}
  \end{enumerate}
\end{proof}

\begin{theorem}[Homogenization theorem]
\label{thm:main-results}
For every $\varepsilon > 0$, the system \eqref{eq:p410}-\eqref{eq:p415} has a unique
solution
$(\varphi^{\varepsilon}, \uu^{\varepsilon}) \in H^1(\Omega) \times
H^1_0(\Omega,\RR^d)$.  Moreover, there exist a constant, symmetric and
elliptic matrix $\bfa^{\hom}$; three constant, symmetric and elliptic
fourth rank tensors $\bfB^{\hom}$, $\bfR^{\hom}$, $\bfT^{\hom}$; and a constant symmetric
fourth rank tensor $\bfC^{\hom}$ such that, as
  $\varepsilon\to0$, we have
\begin{align}
  \label{eq:80}
  \varphi^{\varepsilon} \wcv \varphi^0 \text{ in } H^1(\Omega),
  \quad
  \uu^{\varepsilon}
  \wcv \uu^0 \coloneqq \vv^0 + \ww^0 \text{ in } H_0^1(\Omega,\RR^d),
\end{align}
where $\varphi^0 \in H^1(\Omega)$ and $\vv^0, \ww^0 \in H_0^1(\Omega,\RR^d)$ satisfy the following effective system of equations, all defined on the domain $\Omega$,
\begin{align}
\label{eq:78}
\begin{split}
  -\Div \left( \bfa^{\hom} \nabla \varphi^0 \right)
  &= f \text{ in }\Omega, \quad \varphi^0 = h \text{ on } \partial
    \Omega,\\
 -\Div \left( \bfB^{\hom} : \DD (\vv^0)  \right)
  &= \bg  + \Div \left( \bfC^{\hom} : \left( \nabla
  \varphi^0 \otimes \nabla \varphi^0\right) \right)\text{ in }\Omega, \quad \vv^0 = 0 \text{ on } \partial
    \Omega,\\
   -\Div \left( \bfR^{\hom}: \DD(\ww^0)  \right)
  &=  \Div \left( \bfT^{\hom}: \DD(\vv^0) 
   \right)  \text{ in }\Omega, \quad \ww^0 = 0 \text{ on } \partial
    \Omega.
 \end{split}
\end{align}
\end{theorem}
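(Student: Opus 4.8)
The plan is to use two-scale convergence, exploiting the weak (triangular) coupling of the system: first homogenize the scalar equation \eqref{eq:p421}, then feed its corrector into the elastic equation \eqref{eq:p411}, where the high-contrast stiffness $\varepsilon^{-2\gamma}\bfR\mathds{1}_{Y_s}$ is responsible for the non-local term. The first thing I would do is record the high-contrast bound already hidden in \eqref{eq:16}: testing \eqref{eq:r7} with $\uu^\varepsilon$ and combining with \eqref{eq:17}--\eqref{eq:18} gives, uniformly in $\varepsilon$, $\norm{\varepsilon^{-\gamma}\mathds{1}_{\Omega_s^\varepsilon}\DD(\uu^\varepsilon)}_{L^2(\Omega,\RR^{d\times d})}\le C$; in particular $\mathds{1}_{\Omega_s^\varepsilon}\DD(\uu^\varepsilon)\to 0$ strongly in $L^2$, and, after subtracting local rigid displacements via the scale-invariant second Korn inequality on each particle $Y^\varepsilon_{i,s}$, the rescaled strain is two-scale compact. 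Then, up to a subsequence, Allaire's compactness yields $\varphi^0\in H^1(\Omega)$, $\varphi^1\in L^2(\Omega;H^1_\per(Y)/\RR)$, $\uu^0\in H^1_0(\Omega,\RR^d)$, $\uu^1\in L^2(\Omega;H^1_\per(Y,\RR^d))$ and a limit $\bfG$ supported in $\Omega\times Y_s$, with $\varphi^\varepsilon\wcv\varphi^0$, $\nabla\varphi^\varepsilon\tscale\nabla_x\varphi^0+\nabla_y\varphi^1$, $\uu^\varepsilon\wcv\uu^0$, $\DD(\uu^\varepsilon)\tscale\DD_x(\uu^0)+\DD_y(\uu^1)$, and $\varepsilon^{-\gamma}\mathds{1}_{\Omega_s^\varepsilon}\DD(\uu^\varepsilon)\tscale\mathds{1}_{Y_s}\bfG$. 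Because $\mathds{1}_{\Omega\times Y_s}$ is an admissible test function (the Carath\'{e}odory remark closing \cref{sec:some-results-two}), the strong convergence $\mathds{1}_{\Omega_s^\varepsilon}\DD(\uu^\varepsilon)\to 0$ forces the rigidity constraint $\DD_x(\uu^0)(x)+\DD_y(\uu^1)(x,y)=0$ for a.e.\ $(x,y)\in\Omega\times Y_s$; equivalently $y\mapsto\uu^1(x,y)+\DD_x(\uu^0)(x)\,y$ is an infinitesimal rigid displacement on $Y_s$, so the admissible correctors $\uu^1$ are exactly those in $H^1_\per(Y,\RR^d)$ that are rigid on $Y_s$.

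Next I would homogenize \eqref{eq:r6} in the classical way, with oscillating test functions $\eta(x)+\varepsilon\eta_1(x,x/\varepsilon)$: this produces the scalar cell problems $-\Div_y\bigl(\bfa(y)(\ee^j+\nabla_y\chi^j)\bigr)=0$, the identity $\varphi^1(x,y)=\chi^j(y)\,\partial_{x_j}\varphi^0(x)$, the constant symmetric elliptic matrix $\bfa^\hom$, and the first line of \eqref{eq:78}. The key refinement needed for the electrostriction term is the passage to the limit in $\nabla\varphi^\varepsilon\otimes\nabla\varphi^\varepsilon$: since $\varphi^\varepsilon$ is bounded in $W^{1,\infty}(\Omega)$ (as already used in \cref{sec:main-results-1}, via \cite{dangGlobalGradientEstimate2022}) and hypothesis (A4) together with Schauder theory makes $x\mapsto\nabla_x\varphi^0(x)+\nabla_y\varphi^1(x,x/\varepsilon)$ admissible, \cref{sec:two-scale-corrector} gives $\nabla\varphi^\varepsilon(x)-\bigl(\nabla_x\varphi^0(x)+\nabla_y\varphi^1(x,x/\varepsilon)\bigr)\to 0$ in $L^2$, whence $\nabla\varphi^\varepsilon\otimes\nabla\varphi^\varepsilon$ two-scale converges to $(\nabla_x\varphi^0+\nabla_y\varphi^1)\otimes(\nabla_x\varphi^0+\nabla_y\varphi^1)$.

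For the elastic equation I would pass to the limit in \eqref{eq:r7} (which already contains the interface balance \eqref{eq:p413}, automatically satisfied by the weak solution) against two families of oscillating test functions. Using $\bm{\xi}(x)+\varepsilon\bm{\xi}_1(x,x/\varepsilon)$ with $\bm{\xi}_1(x,\cdot)$ rigid on $Y_s$ annihilates the $\varepsilon^{-2\gamma}$-term there (its $y$-strain vanishes on $Y_s$) and, after inserting the expression for $\varphi^1$ and using the rigidity constraint, produces the two-scale variational problem for $(\uu^0,\uu^1)$ governed by the bounded tensor $\bfB$; solving the corresponding constrained elastic cell problems defines $\bfB^\hom$ and $\bfC^\hom$. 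Probing the stiff phase with suitably $\varepsilon$-rescaled test functions, carried along with the rescaled strain $\bfG$, isolates the contribution of $\varepsilon^{-2\gamma}\bfR\mathds{1}_{Y_s}$ and, through cell problems posed on $Y_s$, produces the symmetric elliptic tensor $\bfR^\hom$ and the coupling tensor $\bfT^\hom$. Collecting all the resulting identities and eliminating the microscopic unknowns leaves an effective equation for $\uu^0$ in which the stiff-phase response enters as a term depending non-locally on the data; writing that term as $-\Div\bigl(\bfT^\hom:\DD(\ww^0)\bigr)$ with $\ww^0\in H^1_0(\Omega,\RR^d)$ the solution of the auxiliary elliptic problem driven by $\vv^0:=\uu^0-\ww^0$, and observing that $\vv^0$ then solves the bounded-coefficient homogenized elasticity, casts the limit in the cascade form \eqref{eq:78}. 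Finally, ellipticity of $\bfa^\hom,\bfB^\hom,\bfR^\hom$ together with $\nabla\varphi^0\in L^\infty$ makes \eqref{eq:78} uniquely solvable, so the two-scale limits do not depend on the subsequence and the full family converges; uniqueness of $(\varphi^\varepsilon,\uu^\varepsilon)$ for fixed $\varepsilon$ is \cref{sec:main-results-1}.

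I expect the main obstacle to be this last step: making the product $\varepsilon^{-2\gamma}\bfR\mathds{1}_{Y_s}:\DD(\uu^\varepsilon)$ — a coefficient blowing up against a strain vanishing at the exactly compensating rate $\varepsilon^{\gamma}$ — pass to the limit correctly, which forces the oscillating test functions to be chosen compatibly with the rigidity constraint and the rescaled strain $\varepsilon^{-\gamma}\mathds{1}_{\Omega_s^\varepsilon}\DD(\uu^\varepsilon)$ to be tracked throughout, and then recognizing that the effective law produced this way decouples into the triangular system \eqref{eq:78}, with $\ww^0$ as its non-local component. The ancillary ingredients — the scale-uniform Korn inequality on the particles, the admissibility of $\mathds{1}_{\Omega\times Y_s}$, and the corrector passage for $\nabla\varphi^\varepsilon\otimes\nabla\varphi^\varepsilon$ via the global Lipschitz bound — are the other places where care is required.
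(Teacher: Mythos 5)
Your overall route — two-scale convergence for the scalar equation, using the $W^{1,\infty}$ bound and the corrector theorem to pass to the limit in $\nabla\varphi^\varepsilon\otimes\nabla\varphi^\varepsilon$, then a separate handling of the stiff phase — is in the same spirit as the paper, and the scalar and electrostriction steps match. But the way you decompose the elastic limit diverges from the paper and, as written, contains a genuine gap.

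The paper builds the decomposition $\uu^0=\vv^0+\ww^0$ at the $\varepsilon$-level: it introduces $\vv^\varepsilon$ as the solution of the bounded-coefficient auxiliary problem \eqref{eq:8} with the same right-hand side $\calL^\varepsilon$, and sets $\ww^\varepsilon := \varepsilon^{-\gamma}(\uu^\varepsilon-\vv^\varepsilon)$. Then $\vv^0,\ww^0$ are simply the two-scale limits of $\vv^\varepsilon$ and $\varepsilon^\gamma\ww^\varepsilon$; the second line of \eqref{eq:78} is classical homogenization of \eqref{eq:8} (unconstrained cell problems for $\bfB$ over the full cell $Y$, giving $\bfB^\hom,\bfC^\hom$), and the third line comes from testing \eqref{eq:r7} with $\bm{\xi}=\varepsilon^{2\gamma}\bm{\zeta}^0+\varepsilon^{2\gamma+1}\bm{\zeta}^1$, which rescales the \emph{whole} variational identity so that each term has a finite limit. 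You instead work directly with $\uu^\varepsilon$, derive the rigidity constraint $\DD_x(\uu^0)+\DD_y(\uu^1)=0$ on $\Omega\times Y_s$ (a true fact, but one the paper never needs), propose constrained cell problems, and try to reconstruct the cascade at the limit level; the final step where you recover $\vv^0$ and $\ww^0$ is circular ($\ww^0$ is ``the solution of the auxiliary elliptic problem driven by $\vv^0:=\uu^0-\ww^0$'') and never actually produces the triangular system.

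The concrete break: you claim that testing \eqref{eq:r7} with $\bm{\xi}(x)+\varepsilon\bm{\xi}_1(x,x/\varepsilon)$, $\bm{\xi}_1(x,\cdot)$ rigid on $Y_s$, ``annihilates the $\varepsilon^{-2\gamma}$-term''. It does not. Rigidity kills only $\DD_y(\bm{\xi}_1)$ on $Y_s$; the term $\varepsilon^{-2\gamma}\int_{\Omega_s^\varepsilon}\bfR:\DD(\uu^\varepsilon):\DD(\bm{\xi})$ remains, and the a priori bound $\norm{\mathds{1}_{\Omega_s^\varepsilon}\DD(\uu^\varepsilon)}_{L^2}\le C\varepsilon^\gamma$ only gives it size $O(\varepsilon^{-\gamma})$. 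It is in fact bounded (subtract the bounded $\bfB$-term and $\calL^\varepsilon$ in the full identity), but its limit is a \emph{nonzero} stiff-phase contribution, not $0$. So your ``two-scale variational problem for $(\uu^0,\uu^1)$ governed by the bounded tensor $\bfB$'' is missing a term, and your constrained cell problems would define a tensor that does not coincide with the paper's $\bfB^\hom$, which is the unconstrained one arising from the auxiliary $\vv^\varepsilon$-problem. The missing ingredient is precisely the $\varepsilon$-level splitting $\uu^\varepsilon=\vv^\varepsilon+\varepsilon^\gamma\ww^\varepsilon$, which lets the paper pass to the limit in each piece separately and obtain the cascade \eqref{eq:78} directly.
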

Let 
\begin{align*}
  \kk
  &\coloneqq \bg + \Div \left( \bfC^{\hom} \left( \nabla \varphi^0
    \otimes \nabla \varphi^0 \right) \right),\\
  \calL_B
  &\coloneqq -\Div \left( \bfB^{\hom}: \DD(\cdot) \right),\quad
      \calL_T
    \coloneqq -\Div \left( \bfT^{\hom}: \DD(\cdot) \right), \quad
      \calL_R
  \coloneqq -\Div \left( \bfR^{\hom}: \DD(\cdot) \right).
\end{align*}
Then from the last two equations of \eqref{eq:78}, we have 
\begin{align*}
  \vv^0
  &= \calL_B^{-1} \kk,\\
  \ww^0
  &= -\calL_R^{-1} \calL_T \vv^0 = -\calL_R^{-1} \calL_T \calL_B^{-1} \kk.
\end{align*}
Therefore, 
\begin{align*}
  \uu^0
  = \vv^0 + \ww^0 = \calL_B^{-1} \kk  -\calL_R^{-1} \calL_T \calL_B^{-1} \kk,
\end{align*}
or equivalently, 
\begin{align*}
  \calL_B \uu^0
  = \kk  - \calL_B\calL_R^{-1} \calL_T \calL_B^{-1} \kk.
\end{align*}
The term $- \calL_B\calL_R^{-1} \calL_T \calL_B^{-1} \kk$
characterizes the effect of the high-contrast part
$\frac{1}{\varepsilon^{2\gamma}} \bfR (y) \mathds{1}_{Y_s}(y)$: the
right hand side of the homogenized equation above shows that the
stiffness is taken into account by reducing the body force, which
turns out reducing the magnitude of the homogenized displacement
$\uu^0$.

\begin{proof}[Proof of \cref{thm:main-results}]
\begin{enumerate}[wide]
\item For every $\varepsilon > 0$, let  $\vv^{\varepsilon} \in
  H_0^1(\Omega,\RR^d)$ be the solution of the following equation
\begin{align}
\label{eq:8}
  \begin{split}
    \int_{\Omega} \bfB\left( \frac{x}{\varepsilon} \right): \DD(\vv^{\varepsilon}) : \DD(\bm{\xi})
    \di x 
    &=\calL^{\varepsilon}(\bm{\xi}), \qquad \forall \bm{\xi} \in H_0^1(\Omega,\RR^d),
  \end{split}
\end{align}
where 
\begin{align*}
\calL^{\varepsilon}(\bm{\xi})
  &\coloneqq
    \int_{\Omega} \bg \cdot \bm{\xi} \di x
  - \int_{\Omega} \bfC \left( \frac{x}{\varepsilon} \right): \left(  \nabla \varphi^{\varepsilon} \otimes \nabla \varphi^{\varepsilon} \right): \DD(\bm{\xi})\di x.
\end{align*}
The existence of $\vv^{\varepsilon}$ follows by application of the Lax-Milgram theorem.
Moreover, there
exists $C > 0$ independent of $\varepsilon$ such that 
\begin{align}
\label{eq:19}
  \norm{\vv^{\varepsilon}}_{H^1_0(\Omega,\RR^d)}
  \le C \left( {\norm{\bg}_{L^2(\Omega,\RR^d)}} +
  \norm{h}_{C^{1,1}(\partial \Omega)}^2 +\norm{f}_{L^{\infty}(\Omega)}^2 \right).
\end{align}
Clearly, \eqref{eq:8} represents the classical case without high-contrast coefficients. We now measure the
effect of the high-contrast part by defining
\begin{align*}
  \ww^{\varepsilon}
  \coloneqq \frac{\uu^{\varepsilon} -
  \vv^{\varepsilon}}{\varepsilon^{\gamma}}.
\end{align*}
We have that $\varepsilon^{\gamma}\ww^{\varepsilon} \in H_0^1(\Omega,\RR^d)$ and from
\eqref{eq:18} and \eqref{eq:19}, we obtain
\begin{align}
\label{eq:13}
\norm{\varepsilon^{\gamma} \ww^{\varepsilon}}_{H^1_0(\Omega,\RR^d)}
  \le C \left( {\norm{\bg}_{L^2(\Omega,\RR^d)}} +
  \norm{h}_{C^{1,1}(\partial \Omega)}^2 +\norm{f}_{L^{\infty}(\Omega)}^2 \right).
\end{align}

\item\label{item:1} From \eqref{eq:27} and
  \cite{allaireHomogenizationTwoscaleConvergence1992,nguetsengGeneralConvergenceResult1989}, we conclude that
  there exist $\varphi^0 \in H^1(\Omega)$, $\varphi^1 \in
  L^2(\Omega,H^1_{\per}(Y)/\RR)$,   $\uu^0, \vv^0, \ww^0 \in H_0^1(\Omega,\RR^d)$, and
  $\uu^1, \vv^1, \ww^1 \in L^2(\Omega, H^1_{\per}(Y,\RR^d)/\RR)$  such that 
\begin{align}
\label{eq:28}
  \begin{split}
    &\varphi^{\varepsilon}
      \wcv[H_0^1]  \varphi^0, \qquad
    \nabla \varphi^{\varepsilon} \wcv[2] \nabla \varphi^0 + \nabla_y \varphi^1, \\
    &\uu^{\varepsilon}\wcv[H_0^1] \uu^0, \qquad
      \nabla \uu^{\varepsilon}
      \wcv[2] \nabla \uu^0 + \nabla_y \uu^1,\\
    &\vv^{\varepsilon}\wcv[H_0^1] \vv^0, \qquad
      \nabla \vv^{\varepsilon}
      \wcv[2] \nabla \vv^0 + \nabla_y \vv^1,\\
    &\varepsilon^{\gamma}\ww^{\varepsilon}\wcv[H_0^1] \ww^0, \qquad
      \nabla \varepsilon^{\gamma}\ww^{\varepsilon}
      \wcv[2] \nabla \ww^0 + \nabla_y \ww^1.
  \end{split}
\end{align}
\item In \eqref{eq:r6}, let $$\eta (x) = \eta^0 (x) + \eta^1 \left( x,
    \frac{x}{\varepsilon} \right)$$ with $\eta^0 \in \calD \left(
    \Omega \right)$ and $\eta^1 \in \calD \left( \Omega,
    C_{\per}^{\infty}(Y)/\RR \right)$, then 
\begin{align*}
\int_{\Omega} \bfa \left( \frac{x}{\varepsilon} \right) \nabla
  \varphi^{\varepsilon}(x) \cdot \left( \nabla \eta^0 (x) + \varepsilon
  \nabla \eta^1 \left(x, \frac{x}{\varepsilon} \right) + \nabla_y
  \eta^1 \left( x, \frac{x}{\varepsilon} \right) \right) \di x =
  \int_{\Omega} f (\eta^0 + \varepsilon \eta^1) \di x.
\end{align*}
Since $\bfa \eta^i, i \in \left\{ 0,1 \right\},$ belong to $L^p_{\per}
\left( Y, C (\bar{\Omega}) \right)$, they are admissible in the sense
of \cref{sec:two-scale-conv-1}. As $\varepsilon \to 0$, we get from \eqref{eq:28} that
\begin{align}
\label{eq:7}
\int_{\Omega \times Y} \bfa (y) (\nabla \varphi^0 + \nabla_y
  \varphi^1) \cdot (\nabla \eta^0 + \nabla_y \eta^1) \di x \di y
  = \int_{\Omega} f \eta^0 \di x,
\end{align}
for any $\eta^0 \in \calD(\Omega)$ and
$\eta^1 \in \calD (\Omega,C_{\per}^{\infty}(Y)/\RR)$. Since $\bfa$ is elliptic, the left hand side of \eqref{eq:7} is a coercive continuous bilinear form with variable
$(\varphi^0, \varphi^1) \in H^1(\Omega) \times L^2 \left( \Omega,
  H_{\per}^1(Y)/\RR \right)$. Thus, using the Lax-Milgram theorem we obtain that
\eqref{eq:7} has a unique solution
$(\varphi^0, \varphi^1) \in H^1(\Omega) \times L^2 \left( \Omega,
  H_{\per}^1(Y)/\RR \right)$ satisfying $\varphi^0 = h$ on $\partial \Omega$.

We introduce the cell problems
\begin{align}
\label{eq:9}
\chi^i \in H_{\per}^1(Y)/\RR, \qquad -\Div \left( \bfa (y) (\e^i + \nabla \chi^i)
  \right) = 0, \qquad {1 \le i \le d.}
\end{align}
The existence and uniqueness of the solution of \eqref{eq:9} follows by application of the
Lax-Milgram theorem. At this point, we define $\bfa^{\hom} \in \RR^{d \times d}$ by
\begin{align*}
{ \left( \bfa^{\hom} \right)_{ij} }
  &\coloneqq \int_Y \bfa(y) (\e^i + \nabla_y \chi^i) \cdot (\e^j + \nabla_y
  \chi^j) \di y\\
  &=  \int_Y \bfa(y) (\e^i + \nabla_y \chi^i) \cdot \e^j  \di y,
\end{align*}
where in the last identity we used the fact that 
$\int_Y \bfa (y) \left( \e^i + \nabla_y \chi^i \right) \cdot \nabla_y
\chi^j \di y = 0$, which comes from \eqref{eq:9} and periodicity.
  
Consider the problem
\begin{align}
\label{eq:30}
  \begin{split}
  \phi^0 \in H^1(\Omega), \quad -\Div \left( \bfa^{\hom} \nabla \phi^0
  \right)
  &= f \qquad \text{ in }\Omega,\\
  \phi^0
  &= h \qquad \text{ on }\partial \Omega.    
  \end{split}
\end{align}
By the ellipticity of $\bfa^{\hom}$ and the Lax-Milgram theorem, \eqref{eq:30}
has a unique solution. We now define the function 
\begin{align*}
\phi^1(x,y) \coloneqq \frac{\partial
  \phi^0}{\partial x_i}(x) \chi^i(y),
\end{align*}
where $\phi^0$ is the unique solution of \eqref{eq:30} and $\chi^i$ is the unique solution of \eqref{eq:9}.  By a direct calculation, observe that 
\begin{align}
  \nabla \phi^0(x) + \nabla_y \phi^1(x,y)
  = \frac{\partial
  \phi^0}{\partial x_i}(x) \left( \e^i + \nabla_y \chi^i(y) \right).
  \label{simplif}
\end{align}
For any $\eta^0 \in \calD(\Omega)$ and
$\eta^1 \in \calD (\Omega,C_{\per}^{\infty}(Y)/\RR)$, by \eqref{simplif} we have
\begin{align*}
  &\int_{\Omega \times Y} \bfa (y) (\nabla \phi^0 + \nabla_y
    \phi^1) \cdot (\nabla \eta^0 + \nabla_y \eta^1) \di x \di y\\
  &= \int_{\Omega \times Y} \bfa (y) \frac{\partial
  \phi^0}{\partial x_i}(x) \left( \e^i + \nabla_y \chi^i(y)
    \right)\cdot (\nabla \eta^0(x) + \nabla_y \eta^1(x,y)) \di x \di y\\
  &=\int_{\Omega \times Y} \bfa (y) \frac{\partial
  \phi^0}{\partial x_i}(x) \left( \e^i + \nabla_y \chi^i(y)
    \right)\cdot \nabla \eta^0 (x) \di x \di y\\
  &\quad+ \int_{\Omega \times Y} \bfa (y) \frac{\partial
  \phi^0}{\partial x_i}(x) \left( \e^i + \nabla_y \chi^i(y)
    \right)\cdot  \nabla_y \eta^1(x,y) \di x \di y
\end{align*}
By integration by parts the above becomes
\begin{align*}
  &\int_{\Omega } \left( \int_Y  \bfa (y) \left( \e^i + \nabla_y \chi^i(y)
    \right) \di y\right)\frac{\partial
  \phi^0}{\partial x_i}(x) \cdot \nabla \eta^0 (x) \di x\\
  &\quad- \int_{\Omega} \int_Y \Div_y \left( \bfa (y)  \left( \e^i + \nabla_y \chi^i(y)
    \right)  \right) \frac{\partial
    \phi^0}{\partial x_i}(x) \eta^1(x,y) \di y \di x\\
  &=\int_{\Omega } \left( \int_Y  \bfa (y) \left( \e^i + \nabla_y \chi^i(y)
    \right) \cdot \e^j\di y\right)\frac{\partial
  \phi^0}{\partial x_i}(x) \frac{\partial \eta^0}{\partial x_j} (x) \di x\\
  &\quad- \int_{\Omega} \int_Y \Div_y \left( \bfa (y)  \left( \e^i + \nabla_y \chi^i(y)
    \right)  \right) \frac{\partial
  \phi^0}{\partial x_i}(x) \eta^1(x,y) \di y \di x\\
  &= \int_{\Omega} f \eta^0 \di x,
\end{align*}
where, in the last equality, we use \eqref{eq:30} to compute the first
integral and \eqref{eq:9} to cancel the second one. It follows that
$(\phi^0, \phi^1)$ also satisfies \eqref{eq:7}. Therefore, by
uniqueness, $\varphi^0 = \phi^0$ and $\varphi^1 = \phi^1$.

\item In \eqref{eq:8}, choose $\bm{\xi}(x) =  \bm{\zeta}^0 (x)+ \varepsilon \bm{\zeta}^1 \left( x,
  \frac{x}{\varepsilon} \right)$ with $\bm{\zeta}^0 \in \calD(\Omega,\RR^d)$
and $\bm{\zeta}^1 \in \calD \left(
  \Omega,C_{\per}^{\infty}(Y,\RR^d)/\RR \right)$, then 
\begin{align*}
\int_{\Omega} \bfB \left( \frac{x}{\varepsilon} \right):  \DD (
  \vv^{\varepsilon} ): \left( \DD (\bm{\zeta}^0) + \varepsilon \DD
  (\bm{\zeta}^1) + \DD_y \left( \bm{\zeta}^1 \right) \right) \di x
  &= \calL^{\varepsilon} \left(\bm{\zeta}^0 + \varepsilon \bm{\zeta}^1 \left( x,
  \frac{x}{\varepsilon} \right)\right).
\end{align*}

Since $\bfa$ is piecewise  $C^{\alpha}-$H\"{o}lder continuous, by
\cite[Theorem
3.2]{francfortEnhancementElastodielectricsHomogenization2021} (see
also \cite[Theorem 1]{dangExplicitCorrectorHomogenization2023} for the
nonlinear case) we have: 
\begin{align*}
\lim_{\varepsilon\to 0} \norm{\nabla \varphi^{\varepsilon}(\cdot) -
  \nabla \varphi^0(\cdot) - \nabla_y \varphi^1 \left( \cdot,
  \frac{\cdot}{\varepsilon} \right)}_{L^2(\Omega)}
  =0.
\end{align*}
Therefore, by the Averaging Lemma \cite{bensoussanAsymptoticAnalysisPeriodic2011},
\begin{align}
\label{eq:32}
  \lim_{\varepsilon\to 0} \norm{\nabla \varphi^{\varepsilon}}_{L^2(\Omega)}
  = \norm{
  \nabla \varphi^0 + \nabla_y \varphi^1}_{L^2(\Omega\times Y)}.
\end{align}

Letting $\varepsilon \to 0$, by \cref{sec:two-scale-corrector}, \eqref{eq:32}, and
H\"{o}lder's inequality, we obtain 
\begin{align}
  \label{eq:21}
  \begin{split}
&\int_{\Omega\times Y} \bfB (y) : ( \DD(\vv^0) + \DD_y (\vv^1) ): \left( \DD (\bm{\zeta}^0)  + \DD_y \left( \bm{\zeta}^1 \right) \right) \di x \di  y\\
&\qquad= \int_{\Omega} \bg \cdot \bm{\zeta}^0\di x\\
&\qquad \quad  - \int_{\Omega\times Y} \bfC \left(y \right): \left( ( \nabla \varphi^0
    + \nabla_y \varphi^1)\otimes (\nabla \varphi^0
    + \nabla_y \varphi^1 )\right) : \left( \DD (\bm{\zeta}^0) + \DD_y (\bm{\zeta}^1)
    \right)\di x \di y.
  \end{split}
\end{align}
Because $\bfB \in \fraM^e_{\sym,\per}(\lambda_e,\Lambda_e),$ the left
hand side of \eqref{eq:21} is a coercive bilinear form on
$H_0^1 \left( \Omega, \RR^d \right) \times L^2 \left( \Omega,
  H_{\per}^1 (Y,\RR^d)/\RR \right)$. Moreover, since
$f \in L^{\infty}(\Omega)$ and $\varphi^0$ satisfies \eqref{eq:30},
whose coefficients are constant, by a Calder\'on-Zygmund
estimate \cite{fernandez-realRegularityTheoryElliptic2022a}, we have $\nabla \varphi^0 \in L^p(\Omega)$ for any $p \in
[1, \infty)$. Therefore, \eqref{eq:21} has a unique solution.

Similarly, as in Step 3, we define the following:
\begin{itemize}
\item Cell problems:
  \begin{align}
\label{eq:11}
  \begin{split}
    &\bfV^{ij} \in H_{\per}^1(Y,\RR^d)/\RR, \qquad\bfp^{ij} \in
      H_{\per}^1(Y,\RR^d)/\RR,\\
    &-\Div_y \bfB (y):  \DD_y(y_j \e^i - \bfV^{ij})  = 0,\\
    &-\Div_y\left(\bfB(y):  \DD_y
      (\bfp^{ij}) +  \bfC(y):   (\e^i + \nabla_y \chi^i)\otimes(\e^j + \nabla_y
      \chi^j) \right) = 0,
  \end{split}
\end{align}
for $1 \le i,j \le d.$    
\item Fourth-order homogenized tensors $\bfB^{\hom}$ and
  $\bfC^{\hom}$:
\begin{align}
\label{eq:31}  
  \begin{split}
     \bfB^{\hom}_{ijmn}
  &\coloneqq \int_Y \bfB (y): \DD_y(y_j\e^i
    -\bfV^{ij}):\DD_y(y_n\e^m -\bfV^{mn}) \di y,\\
    \bfC^{\hom}_{ijmn}
  &\coloneqq \int_Y  \left[ \bfB(y):   \DD_y
      (\bfp^{ij}) + \bfC (y):  (\e^i + \nabla_y \chi^i) \otimes(\e^j +
    \nabla_y \chi^j) \right]:\DD_y(y_n \e^m) \di y.
  \end{split}
\end{align}
\item Homogenized equation:
  \begin{align}
\label{eq:33}
-\Div \left( \bfB^{\hom}_{ijmn} \left[ \DD(\vv^0) \right]_{ij}
  \e^m \otimes \e^n \right) = \bg + \Div \left( \bfC^{\hom}_{ijmn}
  \frac{\partial \varphi^0}{\partial x_i} \frac{\partial
  \varphi^0}{\partial x_j} \e^m \otimes \e^n \right).
\end{align}
\item Ansatz on $\vv^1$:
  \begin{align}
    \label{eq:10}
  \vv^1(x,y)
  = -\left[ \DD(\vv^0) \right]_{ij} \bfV^{ij}(y) + \frac{\partial
  \varphi^0}{\partial x_i} \frac{\partial \varphi^0}{\partial x_j} \bfp^{ij}(y).
\end{align}
\end{itemize}
A few remarks are in order: 
\begin{enumerate}
\item The existence and uniqueness of the solutions of the cell problems \eqref{eq:11}
  follow by the Lax-Milgram theorem and the fact that
  $\nabla_y \chi^i \in L^{\infty} (Y)$, cf. \cite[Proof of Theorem
  3.2]{francfortEnhancementElastodielectricsHomogenization2021}.
\item The tensors $\bfB^{\hom}$ and $\bfC^{\hom}$ are
  symmetric. First, notice that $$\DD(y_j \e^i) = \frac{1}{2} \left(
    \e^i \otimes \e^j + \e^j \otimes \e^i \right) = \DD (y_i\e^j),$$
  and, since $\bfC$ is symmetric,   $$\bfC : \bfc = \bfC : \bfc^{\top}$$ for any matrix $\bfc$.  Therefore, $\bfV^{ij} = \bfV^{ji}$ and
  $\bfp^{ij} = \bfp^{ji}$, which implies $\bfB^{\hom}$ and
  $\bfC^{\hom}$ are symmetric.
\item By using the symmetry of $\bfB^{\hom}$ and $\bfC^{\hom},$ we can
  write \eqref{eq:33} in a more elegant form. Indeed, since 
  $\bfB_{ijmn}^{\hom}=\bfB_{mnij}^{\hom}$ and
  $\bfC_{ijmn}^{\hom} = \bfC_{mnij}^{\hom}$, we can write
\begin{align*}
\bfB^{\hom}_{ijmn} \left[ \DD(\vv^0) \right]_{ij}
  \e^m \otimes \e^n
  &= \bfB^{\hom}_{ijmn} \e^m \otimes \e^n \otimes \e^i \otimes \e^j :
    \left[ \DD(\vv^0) \right]_{pq} \e^p \otimes \e^q\\
  &= \bfB^{\hom}_{ijmn} \e^i \otimes \e^j \otimes \e^m \otimes \e^n :
    \left[ \DD(\vv^0) \right]_{pq} \e^p \otimes \e^q\\
  &= \bfB^{\hom} : \DD(\vv^0), 
\end{align*}
and, similarly, 
\begin{align*}
  \bfC^{\hom}_{ijmn}
  \frac{\partial \varphi^0}{\partial x_i} \frac{\partial
  \varphi^0}{\partial x_j} \e^m \otimes \e^n
  &= \bfC^{\hom}_{ijmn} \e^m \otimes \e^n \otimes \e^i \otimes \e^j : \frac{\partial \varphi^0}{\partial x_p} \frac{\partial
  \varphi^0}{\partial x_q} \e^p \otimes \e^q\\
  &= \bfC^{\hom} : \left( \nabla \varphi^0 \otimes \nabla \varphi^0 \right).
\end{align*}
Therefore, \eqref{eq:33} is equivalent to 
\begin{align}
  \label{eq:12}
  -\Div \left( \bfB^{\hom} : \DD (\vv^0) + \bfC^{\hom} : \left( \nabla
  \varphi^0 \otimes \nabla \varphi^0\right) \right)
  = \bg.
\end{align}
\end{enumerate}

We now check that $\vv^0$ and $\vv^1$ in \eqref{eq:33} and
\eqref{eq:10}, respectively, satisfy \eqref{eq:21}. Indeed, by
substituting the above ansatz \eqref{eq:10} into \eqref{eq:21}, we
obtain
\begin{align*}
  \begin{split}
&\int_{\Omega\times Y} \bfB (y):  \left( \DD(\vv^0) + \DD_y \left( -\left[ \DD(\vv^0) \right]_{ij} \bfV^{ij}(y) + \frac{\partial
  \varphi^0}{\partial x_i} \frac{\partial \varphi^0}{\partial x_j}
  \bfp^{ij}(y) \right) \right)\\
    &\qquad \quad \qquad :\left( \DD (\bm{\zeta}^0)  + \DD_y \left( \bm{\zeta}^1 \right) \right) \di x \di  y\\
&= \int_{\Omega} \bg \cdot \bm{\zeta}^0\di x\\
&\qquad  - \int_{\Omega\times Y} \bfC \left(y \right): \left( ( \nabla \varphi^0
    + \nabla_y \varphi^1)\otimes (\nabla \varphi^0
    + \nabla_y \varphi^1 )\right) :\left( \DD (\bm{\zeta}^0)  + \DD_y \left( \bm{\zeta}^1 \right) \right)\di x \di y.
  \end{split}
\end{align*}
In the above equation, we rewrite 
\begin{align*}
  \DD (\vv^0)
  &= \frac{1}{2} \left[ \DD(\vv^0) \right]_{ij} \e^i \otimes \e^j +
    \frac{1}{2} \left[ \DD(\vv^0) \right]_{ji} \e^i \otimes \e^j \quad
    \text{(since $\DD(\vv^0) = \DD(\vv^0)^{\top}$)}\\
  &= \left[ \DD(\vv^0) \right]_{ij} \frac{1}{2} \left( \e^i \otimes
    \e^j + e^j \otimes \e^i \right)\\
  &= \left[ \DD(\vv^0) \right]_{ij} \frac{1}{2} \left(
    \frac{\partial}{\partial y_m} y_j \e^i \otimes \e^m +
    \frac{\partial}{\partial y_n}y_j \e^n \otimes \e^i \right)\\
  &= \left[ \DD(\vv^0) \right]_{ij} \DD_y \left( y_j \e^i\right),
  \\
  \nabla \varphi^0 + \nabla_y \varphi^1
  &= \frac{\partial \varphi^0}{\partial x_i} \left( \e^i + \nabla_y \chi^i \right),
\end{align*}
then we group like-terms together to obtain
\begin{align*}
  \begin{split}
    &\int_{\Omega} \left[ \DD(\vv^0) \right]_{ij} 
      \int_Y\bfB (y) : \left( \DD_y \left( y_j\e^i-\bfV^{ij}(y) \right) \right): \left( \DD (\bm{\zeta}^0)  + \DD_y \left( \bm{\zeta}^1 \right) \right)
      \di y \di  x\\
    &\qquad+ \int_{\Omega}  \frac{\partial
      \varphi^0}{\partial x_i} \frac{\partial \varphi^0}{\partial x_j}
      \int_Y\bfB (y)  : \DD_y \left(  \bfp^{ij}(y) \right) :\left( \DD
      (\bm{\zeta}^0)  + \DD_y \left( \bm{\zeta}^1 \right) \right)  \di
      y \di  x\\
    &= \int_{\Omega} \bg \cdot \bm{\zeta}^0\di x\\
    &\qquad 
      - \int_{\Omega} \frac{\partial
      \varphi^0}{\partial x_i} \frac{\partial \varphi^0}{\partial x_j}
      \int_Y \bfC \left(y \right): ( \e^i + \nabla_y \chi^i)\otimes (\e^j + \nabla_y \chi^j) : \left( \DD (\bm{\zeta}^0)  + \DD_y \left( \bm{\zeta}^1 \right) \right) \di x \di y.
  \end{split}
\end{align*}
Due to \eqref{eq:11}, integration by parts and periodicity, we have
\begin{align*}
  &\int_Y\bfB (y) : \left( \DD_y \left( y_j\e^i-\bfV^{ij}(y) \right) \right): \DD_y \left( \bm{\zeta}^{1}(x,y) \right)
    \di y
    = 0,\\
  &\int_Y \left[\bfB (y) :  \DD_y \left(  \bfp^{ij}(y) \right) + \bfC
    \left(y \right): ( \e^i + \nabla_y \chi^i)\otimes (\e^j + \nabla_y
    \chi^j) \right] : \DD_y \left(  \bm{\zeta}^{1}(x,y) \right)  \di y= 0,
\end{align*}
for any $\bm{\zeta}^1 \in \calD \left(
  \Omega,C_{\per}^{\infty}(Y,\RR^d)/\RR \right)$.
Therefore, we have
\begin{align*}
  \begin{split}
    &\int_{\Omega} \left[ \DD(\vv^0) \right]_{ij} 
      \int_Y\bfB (y) : \left( \DD_y \left( y_j\e^i-\bfV^{ij}(y) \right) \right): \DD (\bm{\zeta}^0)  
      \di y \di  x\\
    &\qquad+ \int_{\Omega}  \frac{\partial
      \varphi^0}{\partial x_i} \frac{\partial \varphi^0}{\partial x_j}
      \int_Y\bfB (y)  : \DD_y \left(  \bfp^{ij}(y) \right) : \DD
      (\bm{\zeta}^0)   \di
      y \di  x\\
    &= \int_{\Omega} \bg \cdot \bm{\zeta}^0\di x\\
    &\qquad 
      - \int_{\Omega} \frac{\partial
      \varphi^0}{\partial x_i} \frac{\partial \varphi^0}{\partial x_j}
      \int_Y \bfC \left(y \right): ( \e^i + \nabla_y \chi^i)\otimes (\e^j + \nabla_y \chi^j) : \DD (\bm{\zeta}^0) \di y \di x.
  \end{split}
\end{align*}
Substituting the identity $\DD(\bm{\zeta}^0) = \left[
  \DD(\bm{\zeta}^0) \right]_{mn} \DD_y \left( y_n\e^m \right)$ into the above equation leads to
\begin{align*}
  \begin{split}
    &\int_{\Omega} \left[ \DD(\vv^0) \right]_{ij} \left[
      \DD(\bm{\zeta}^0) \right]_{mn} 
      \int_Y\bfB (y) : \left( \DD_y \left( y_j\e^i-\bfV^{ij}(y) \right) \right):  \DD_y \left( y_n\e^m \right)  
      \di y \di  x\\
    &\qquad+ \int_{\Omega}  \frac{\partial
      \varphi^0}{\partial x_i} \frac{\partial \varphi^0}{\partial x_j} \left[\DD(\bm{\zeta}^0) \right]_{mn}
      \int_Y\bfB (y)  : \DD_y \left(  \bfp^{ij}(y) \right) : \DD_y \left( y_n\e^m \right)  \di
      y \di  x\\
    &= \int_{\Omega} \bg \cdot \bm{\zeta}^0\di x\\
    &\qquad 
      - \int_{\Omega} \frac{\partial
      \varphi^0}{\partial x_i} \frac{\partial \varphi^0}{\partial x_j} \left[\DD(\bm{\zeta}^0) \right]_{mn}
      \int_Y \bfC \left(y \right): ( \e^i + \nabla_y \chi^i)\otimes (\e^j + \nabla_y \chi^j) : \DD_y \left( y_n\e^m \right)  \di y \di x.
  \end{split}
\end{align*}
From \eqref{eq:11} and periodicity, we have
$\int_Y \bfB (y): \DD_y(y_j\e^i -\bfV^{ij}):\DD_y(\bfV^{mn}) \di y =
0.$ Therefore, the above equation can be written succinctly as
\begin{align*}
  \begin{split}
    &\int_{\Omega} \bfB^{\hom}_{ijmn} \left[ \DD(\vv^0) \right]_{ij} \left[
      \DD(\bm{\zeta}^0) \right]_{mn} 
      \di  x
      + \int_{\Omega} \bfC^{\hom}_{ijmn}  \frac{\partial
      \varphi^0}{\partial x_i} \frac{\partial \varphi^0}{\partial x_j} \left[\DD(\bm{\zeta}^0) \right]_{mn} \di  x\\
    &= \int_{\Omega} \bg \cdot \bm{\zeta}^0\di x
  \end{split}
\end{align*}
which holds since $\vv^0$ satisfies \eqref{eq:33}.

\item From \eqref{eq:r7}, we get
\begin{align*}
  \calL^{\varepsilon}(\bm{\xi})
  &=\int_{\Omega}\bfB^{\varepsilon}\left( \frac{x}{\varepsilon} \right) : \DD (\uu^{\varepsilon}) : \DD(\bm{\xi}) \di x\\
  &= \int_{\Omega} \left(  \bfB\left( \frac{x}{\varepsilon}
    \right) + 
 \frac{1}{\varepsilon^{2\gamma}} \bfR \left( \frac{x}{\varepsilon}
    \right)
    \mathds{1}_{Y_s}\left( \frac{x}{\varepsilon}
    \right) \right) : \DD (\vv^{\varepsilon} +
    \varepsilon^{\gamma} \ww^{\varepsilon}) : \DD (\bm{\xi}) \di x \\
  &= \int_{\Omega} \bfB \left( \frac{x}{\varepsilon} \right) : \DD
    (\vv^{\varepsilon}): \DD(\bm{\xi}) \di x + \varepsilon^{\gamma}
    \int_{\Omega} \bfB \left( \frac{x}{\varepsilon} \right) : \DD
    (\ww^{\varepsilon}): \DD(\bm{\xi}) \di x\\
  &\qquad + \frac{1}{\varepsilon^{2\gamma}}
    \int_{\Omega_s^{\varepsilon}} \bfR \left( \frac{x}{\varepsilon}
    \right):  \DD (\vv^{\varepsilon}) : \DD (\bm{\xi})\di x +
    \frac{1}{\varepsilon^{\gamma}} \int_{\Omega_s^{\varepsilon}} \bfR
    \left( \frac{x}{\varepsilon} \right):  \DD (\ww^{\varepsilon}) :
    \DD(\bm{\xi}) \di x.
\end{align*}
Choosing $$\bm{\xi}(x) = \varepsilon^{2\gamma} \bm{\zeta}^0 (x)+ \varepsilon^{2 \gamma + 1} \bm{\zeta}^1 \left( x,
  \frac{x}{\varepsilon} \right)$$ with $\bm{\zeta}^0 \in \calD(\Omega,\RR^d)$
and $\bm{\zeta}^1 \in \calD \left(
  \Omega,C_{\per}^{\infty}(Y,\RR^d)/\RR \right)$, we obtain 
\begin{align*}
  &\varepsilon^{2\gamma}\calL^{\varepsilon}( \bm{\zeta}^0 + \varepsilon \bm{\zeta}^1)\\
  &= \varepsilon^{2\gamma}\int_{\Omega} \bfB \left( \frac{x}{\varepsilon} \right):  \DD
    (\vv^{\varepsilon}): \left( \DD (\bm{\zeta}^0) + \varepsilon \DD
    (\bm{\zeta}^1) + \DD_y(\bm{\zeta}^1) \right) \di x\\
  &\qquad+ \varepsilon^{3\gamma}
    \int_{\Omega} \bfB \left( \frac{x}{\varepsilon} \right) : \DD
    (\ww^{\varepsilon}): \left( \DD (\bm{\zeta}^0) + \varepsilon \DD
    (\bm{\zeta}^1) + \DD_y(\bm{\zeta}^1) \right) \di x\\
  &\qquad +
    \int_{\Omega_s^{\varepsilon}} \bfR \left( \frac{x}{\varepsilon}
    \right):  \DD (\vv^{\varepsilon}) : \left( \DD (\bm{\zeta}^0) + \varepsilon \DD
    (\bm{\zeta}^1) + \DD_y(\bm{\zeta}^1) \right)\di x \\
  &\qquad +
     \int_{\Omega_s^{\varepsilon}} \bfR
    \left( \frac{x}{\varepsilon} \right):  \DD (\varepsilon^{\gamma}\ww^{\varepsilon}) :
    \left( \DD (\bm{\zeta}^0) + \varepsilon \DD
    (\bm{\zeta}^1) + \DD_y(\bm{\zeta}^1) \right) \di x\\
  &= O(\varepsilon^{2\gamma}) + O(\varepsilon^{3\gamma})\\
  &\qquad +
    \int_{\Omega}\left( \mathds{1}_{\Omega_s^{\varepsilon}} -
  \mathds{1}_{\Omega \times Y_s} \left( x,
    \frac{x}{\varepsilon} \right) \right) \bfR \left( \frac{x}{\varepsilon}
    \right):  \DD (\vv^{\varepsilon}) : \left( \DD (\bm{\zeta}^0) + \varepsilon \DD
    (\bm{\zeta}^1) + \DD_y(\bm{\zeta}^1) \right)\di x \\
  &\qquad +
    \int_{\Omega}\mathds{1}_{\Omega \times Y_s} \left( x,
    \frac{x}{\varepsilon} \right) \bfR \left( \frac{x}{\varepsilon}
    \right):  \DD (\vv^{\varepsilon}) : \left( \DD (\bm{\zeta}^0) + \varepsilon \DD
    (\bm{\zeta}^1) + \DD_y(\bm{\zeta}^1) \right)\di x \\
  &\qquad +
    \int_{\Omega}\left( \mathds{1}_{\Omega_s^{\varepsilon}} -
  \mathds{1}_{\Omega \times Y_s} \left( x,
    \frac{x}{\varepsilon} \right) \right) \bfR \left( \frac{x}{\varepsilon}
    \right):  \DD (\ww^{\varepsilon}) : \left( \DD (\bm{\zeta}^0) + \varepsilon \DD
    (\bm{\zeta}^1) + \DD_y(\bm{\zeta}^1) \right)\di x \\
  &\qquad +
    \int_{\Omega}\mathds{1}_{\Omega \times Y_s} \left( x,
    \frac{x}{\varepsilon} \right) \bfR \left( \frac{x}{\varepsilon}
    \right):  \DD (\ww^{\varepsilon}) : \left( \DD (\bm{\zeta}^0) + \varepsilon \DD
    (\bm{\zeta}^1) + \DD_y(\bm{\zeta}^1) \right)\di x
\end{align*}
Because $\mathds{1}_{\Omega \times Y_s} \in L^2(\Omega \times Y)$ is a Carath\'{e}odory function,
we have $\mathds{1}_{\Omega \times Y_s} \left( \cdot, \frac{\cdot}{\varepsilon} \right)$
is measurable. Notice that $\mathds{1}_{\Omega_s^{\varepsilon}} \wcv[2] \mathds{1}_{\Omega \times Y_s}$ and $\mathds{1}_{\Omega \times Y_s} \in L^2_{\per} (Y, C(\Omega))$ then, by \cref{sec:two-scale-corrector}, we have $\norm{\mathds{1}_{\Omega_s^{\varepsilon}} -
  \mathds{1}_{\Omega \times Y_s} \left( \cdot,
    \frac{\cdot}{\varepsilon} \right)}_{L^2} \cv[\varepsilon \to 0] 0.$ Thus, letting
$\varepsilon \to 0$ in the equation above and using \eqref{eq:28} lead to
\begin{align}
\label{eq:20}
  \begin{split}
    0
  &= \int_{\Omega \times Y_s} \bfR (y) : ( \DD (\vv^0) + \DD_y
    (\vv^1) ) : \left( \DD(\bm{\zeta}^0) + \DD_y (\bm{\zeta}^1) \right)
    \di x \di y\\
  &\qquad+ \int_{\Omega \times Y_s} \bfR (y) : (
    \DD(\ww^0)+ \DD_y (\ww^1)  ) : \left( \DD (\bm{\zeta}^0)  + \DD_y(\bm{\zeta}^1) \right) \di x \di y.
  \end{split}
\end{align}
The above equation characterizes $\ww^1$ on $\Omega \times Y_s$. To
uniquely determine $(\ww^0, \ww^1)$, we will show that $\DD_y (\ww^1) = 0 $ on
$\Omega \times Y_f$. Indeed, choosing $\vec{\xi} \in
C_c^{\infty}(\Omega_f^{\varepsilon})$ in \eqref{eq:r7}, we obtain 
\begin{align*}
\int_{\Omega} \bfB \left( \frac{x}{\varepsilon} \right): \DD(\uu^{\varepsilon}):\DD(\bm{\xi})
  \di x 
  =\int_{\Omega} \bg \cdot \bm{\xi} \di x
  - \int_{\Omega} \bfC \left( \frac{x}{\varepsilon} \right): \left(  \nabla \varphi^{\varepsilon} \otimes \nabla \varphi^{\varepsilon} \right) : \DD(\bm{\xi})\di x.
\end{align*}
Subtracting \eqref{eq:8}, and noticing that, by definition, $\varepsilon^{\gamma}
\ww^{\varepsilon} = \uu^{\varepsilon} - \vv^{\varepsilon}$, we have
\begin{align*}
\int_{\Omega} \bfB \left( \frac{x}{\varepsilon} \right) : \DD
  (\varepsilon^{\gamma} \ww^{\varepsilon}) : \DD (\vec{\xi}) \di x
  = 0, \qquad \text{ for all } \vec{\xi} \in C_c^{\infty}(\Omega_f^{\varepsilon}).
\end{align*}
By a density argument and the fact that $\bfB$ is elliptic, cf.
\eqref{eq:36}, we obtain
\begin{align*}
0 \le \varepsilon^{\gamma}\lambda_e \norm{\DD (\ww^{\varepsilon})}^2_{L^2(\Omega_f^{\varepsilon})} \le 0.
\end{align*}
We conclude that $\DD \left( \ww^{\varepsilon} \right) = 0$ in
$\Omega_f^{\varepsilon}$. This fact together with \eqref{eq:28} implies
$\DD_y(\ww^1) = 0$ on $\Omega \times Y_f$.

As in the previous step, we define the following:
\begin{itemize}
\item Cell problems:
\begin{align}
\label{eq:14}
  \begin{split}
    &\bfW^{ij} \in H_{\per}^1(Y,\RR^d)/\RR, \\
    &-\Div_y \bfR (y):  \DD_y(y_j \e^i - \bfW^{ij})  =
      0 \quad \text{ in } Y_s,\\
    &\DD_y (y_j \e^i - \bfW^{ij}) = 0 \quad
      \text{ in } Y\setminus Y_s, 
    \\
    &\int_{\Gamma} \left\llbracket
      \bfR (y):  \DD_y(y_j \e^i - \bfW^{ij})
      \right\rrbracket \nn\di \hmeas
  =0,\\
  &\int_{\Gamma} \left\{ \left\llbracket
    \bfR (y) : \DD_y(y_j \e^i - \bfW^{ij})
    \right\rrbracket\nn \right\} \times \nn\di
  \hmeas
  =0,
  \end{split}
\end{align}
for $1 \le i,j \le d.$
\item Symmetric homogenized fourth-order tensors $\bfR^{\hom}$ and $\bfT^{\hom}$, given by
\begin{align*}
  \bfR^{\hom}_{ijmn}
  &\coloneqq \int_Y \bfR (y): \DD_y(y_j\e^i
    -\bfW^{ij}):\DD_y(y_n\e^m -\bfW^{mn}) \di y,\\
    \bfT^{\hom}_{ijmn}
  &\coloneqq \int_{Y_s} \bfR (y): \DD_y(y_j\e^i
    -\bfV^{ij}):\DD_y(y_n\e^m -\bfV^{mn}) \di y.
\end{align*}
The homogenized fourth order tensor $\bfR^{\hom}$ is symmetric, while
$\bfT^{\hom}$ may be non-symmetric.
\item The homogenized equation:
\begin{align}
\label{eq:22}
-\Div \left( \bfT^{\hom}_{ijmn} \left[ \DD(\vv^0) \right]_{ij}
  \e^m \otimes \e^n + \bfR^{\hom}_{ijmn} \left[ \DD(\ww^0) \right]_{ij}
  \e^m \otimes \e^n  \right)
  =0,
\end{align}
which, by a similar computation as above, is equivalent to 
\begin{align}
\label{eq:35}
-\Div \left( \bfT^{\hom}: \DD(\vv^0) 
  + \bfR^{\hom}: \DD(\ww^0)  \right)
  &=0.
\end{align}
\end{itemize}
\end{enumerate}
\end{proof}

\section{Conclusions}
\label{sec:conclusions}
This paper is devoted to the periodic homogenization of high-contrast dielectric elastomers. More specifically, the nonlinear system \eqref{eq:p410}-\eqref{eq:p415} of an electrostatic equation coupled with an elasticity equation with highly oscillatory periodic coefficients is considered, where $\varepsilon \ll 1$ is the characteristic size of the microstructure.  In the considered model, the elastic properties of the constituents are assumed to be vastly different and given by \eqref{eq:5}, which effectively means rigid particles. Here, we note that, for simplicity, instead of selecting the model for {\it absolutely rigid} particles, i.e. $\DD \left(\uu^{\varepsilon}\right) =0$ in $\Gamma^\varepsilon$, we chose the one of the form of \eqref{eq:5}, so we deal with an elliptic system for each fixed $\varepsilon$. The practical relevance of dielectric elastomer composites with rigid particles was mentioned in the Introduction; namely, they are used to reinforce dielectric elastomers to mitigate potential dielectric breakdown or instabilities.

The main results of this work demonstrate that the effective material behaves as a homogeneous dielectric elastomer, described by an ordered decoupled system of PDEs.  Here, `ordered' means that the solution of the electrostatics equation feeds into the first elasticity equation, which has uniformly bounded coefficients, which in turn feeds into the final equation that corresponds to the {\it high-contrast} coefficients of the original problem. The coefficients of these PDEs depend on those of the original heterogeneous material, the geometry of the composite, and the periodicity of the microstructure.  Notably, the effective coefficients are expressed in terms of the solutions to the cell problems \eqref{eq:9}, \eqref{eq:11}, and \eqref{eq:14}. The main homogenization result, provided in \cref{thm:main-results}, asserts the weak convergence of the original fine-scale problem solution to the homogenized solution in the appropriate norms, alongside the statement of the homogenized problem \eqref{eq:78}.

Previous studies, such as \cite{brianeHomogenizationNonuniformlyBounded2002,khruslovHomogenizedModelsComposite1991,marcelliniHomogenizationNonuniformlyElliptic1978}, have observed that when the coefficients of the original fine-scale problem blow up, \textit{non-local terms} emerge in the homogenized equation. In \cite{brianeHomogenizationNonuniformlyBounded2002}, conditions for the coefficients of the original fine-scale system were specified for the cases when non-local effects occur.  In the homogenized response \eqref{eq:78} obtained in this paper, we do {\it not observe} non-local phenomena from the high-contrast in the original fine-scale problem's coefficients, as the asymptotic regime of the fine-scale PDE system \eqref{eq:p410}-\eqref{eq:p415} does not align with the cases highlighted for such effects in \cite{brianeHomogenizationNonuniformlyBounded2002}.

The implications of the current investigation for practical applications in domain sciences are similar to those of any rigorous mathematical homogenization study. Specifically, by accurately capturing the interactions between different phases and the effects of microstructural periodicity, homogenization can lead to the design of materials with tailored properties. Furthermore, homogenization techniques can optimize the use of composite materials by providing effective material properties that simplify complex microstructures into manageable macroscopic models, reducing computational costs and improving the efficiency of simulations used in the design and testing of new materials.

\appendix
\section*{Acknowledgements}
The work of the third author was supported  by NSF grant DMS-2110036.  This
material is based upon work supported by and while serving at the
National Science Foundation for the second author Yuliya Gorb. Any
opinion, findings, and conclusions or recommendations expressed in
this material are those of the authors and do not necessarily reflect
views of the National Science Foundation.

%



\bibliographystyle{siamplain}
\bibliography{homogenisation,arxiv}

\end{document}